\newtheorem{theorem}{Theorem}[section]
\newtheorem{lemma}[theorem]{Lemma}
\newtheorem{corollary}[theorem]{Corollary}
\newtheorem{proposition}[theorem]{Proposition}
\theoremstyle{definition}
\newtheorem{example}[theorem]{Example}
\newtheorem{problem}[theorem]{Problem}
\newtheorem{definition}[theorem]{Definition}
\numberwithin{equation}{section}
\newcommand{\Z}{\ensuremath{\mathbb{Z}}}
\newcommand{\C}{\ensuremath{\mathbb{C}}}
\newcommand{\R}{\ensuremath{\mathbb{R}}}
\newcommand{\T}{\ensuremath{\mathbb{T}}}
\newcommand{\N}{\ensuremath{\mathbb{N}}}
\begin{document}
\title{Wavelets on compact abelian groups}

\author{Marcin Bownik}
\address{(M. Bownik) Department of Mathematics, University of Oregon, Eugene OR, USA}
\email{mbownik@uoregon.edu}
\author{Qaiser Jahan}
\address{(Q. Jahan) School of Basic Sciences, Indian Institute of Technology Mandi, 175005, India}
\email{qaiser@iitmandi.ac.in}


\subjclass[2000]{Primary: 42C40; Secondary: 43A70, 11R04}
\keywords{wavelets, multiresolution analysis, compact abelian group}
\date{\today}
\thanks{The authors are thankful to Jaros\l aw Kwapisz and Ken Ross for useful discussion about this work. The first author was supported in part by the NSF grant DMS-1665056. The second author is supported by Department of Science and Technology, Govt. of India and Indo-U.S. Science and Technology Forum}


\begin{abstract}
Multiresolution analysis (MRA) on a compact abelian group $G$ has been constructed with epimorphism as a dilation operator. We show a characterization of scaling sequences of an MRA on $L^p(G)$, $1\le p<\infty$. With the help of the scaling sequence we construct an orthonormal wavelet basis of $L^2(G)$.
\end{abstract}

\maketitle


\section{Introduction}

In recent years there has been a considerable interest in construction of wavelets on locally compact abelian groups. Dahlke \cite{Dah} was one of the first to introduce the concept of wavelets on locally compact abelian groups as he has constructed MRA and wavelets with the help of self-similar tiles and $B$-splines. Lang \cite{Lang1, Lang2} has constructed wavelets on the Cantor dyadic group.  Wavelets on more general $p$-adic Vilenkin groups were studied by Farkov \cite{Farkov}. 
J. J. Benedetto and R. L Benedetto \cite{BB1, BB2} studied wavelets on local fields and more generally on totally disconnected, nondiscrete locally compact abelian group with compact open subgroup. Wavelets on local fields of zero characteristic, that is a field of $p$-adic numbers, were studied by Skopina and her collaborators \cite{AES, KSS, SS}. Multiresolution analysis and wavelets on local fields of positive characteristic were given by Jiang, Li, and Jin \cite{JLJ} and Behera and Jahan \cite{BJ1, BJ2}. Multiresolution analysis and wavelet bases on abelian zero-dimensional groups were studied by Lukomskii \cite{Luk1, Luk2}, and more recently by Barg and Skriganov \cite{BS} in a general setting of association schemes on measure spaces.

The underlying theme of these works is that we are given an automorphism on a locally compact abelian group $G$ which plays a role of a dilation and a discrete subgroup of $G$ which plays a role of translations. As in the classical setting of wavelets on the real line, or Euclidean space $\R^d$, a wavelet system is generated by translates and dilates of a finite collection of functions in $L^2(G)$ over integer scales. In contrast, when the group $G$ is compact, we can no longer require that a dilation is given by an automorphism, but rather by a surjective endomorphism (epimorphism) of $G$. The reason is that automorphisms of a compact group $G$ do not lead to a sensible definition of an MRA.
This is already seen in the construction of periodic wavelets over a finite dimensional torus $G=\T^d$ by Maksimenko and Skopina \cite{MS}, where the role of dilation is played by an epimorphism of $\T^d$, which is not an automorphism. Consequently, wavelets are indexed only over positive scales since stretching (negative dilates) is not available in the compact case.

In this paper we assume that we are given a compact abelian group $G$ and an epimorphism $A:G \to G$ with a finite kernel such that $\bigcup_{j\in \N_0} \ker A^j$ is dense in $G$. These standing assumptions are necessary to guarantee that an MRA $(V_j)_{j\in \N_0}$ satisfies the density property  $\overline{\bigcup_{j=0}^{\infty}V_j}=L^p(G)$. Inspired by the work of Skopina \cite{Sk} and her collaborators \cite{MS, NPS}, we define the concept of a multiresolution analysis (MRA) in this setting. 
Our first main result is a characterization of scaling sequences of an MRA for $L^p(G)$, $1\le p<\infty$, which generalizes the results of Maksimenko and Skopina \cite{MS} from a finite dimensional torus $\T^d$ to a compact abelian group $G$. The results in \cite{MS} require that an epimorphism $A$ of $\T^d$ is given by an expansive $d\times d$ matrix with integer entries. That is, all eigenvalues $\lambda$ of $A$ satisfy $|\lambda|>1$. Even in the setting of the torus $G=\T^d$, our results are a generalization of \cite{MS} as we impose a weaker assumption on an epimorphism. We show that our standing assumptions in the case of the torus $\T^d$ are equivalent to $A$ having no eigenvalues which are integral algebraic units. That is, for each eigenvalue $\lambda$ of $A$, its reciprocal $1/\lambda$ is not an algebraic integer.
Beyond the setting of the torus we provide several examples of epimorphisms of compact abelian groups satisfying our standing assumptions. These include a compact Cantor group with more general dilations than the backward shift mapping. 

Our second main result shows the existence of minimally supported frequency (MSF) multiresolution analysis for every compact abelian group satisfying our standing assumptions. This is an important result as it shows that our characterization results are not vacuous despite the fact the actual constructions of MRAs need to be customized to a specific group $G$ and an epimorphism $A$. Moreover, once an MRA is given to us, we show that a rather standard procedure yields an orthonormal wavelet basis of $L^2(G)$.

In Section~2, we present the necessary definitions and properties of epimorphisms on compact abelian groups. We also provide several specific examples of compact abelian groups and epimorphisms satisfying our standing assumptions. In addition, we characterize epimorphisms of the torus $\T^d$ with dense kernel of iterates. In Section~3, we define the concept of an MRA $(V_j)_{j\in \N_0}$ on a compact abelian group and we prove the characterization of scaling sequences which is preceded by many results including the construction of a basis in each space $V_j$. In the last section we construct wavelet  bases for $L^2(G)$. We also prove the existence of MSF MRA under our standing assumptions on an epimorphism $A$. We conclude the paper by constructing an orthonormal MSF wavelet basis of $L^2(G)$.

\section{Preliminaries}

In this section we give some basic definitions and set our notations which we will use throughout the article.        
Let $G$ be a second countable locally compact abelian group. Let $\widehat{G}$ be its dual group, i.e., 
\[
\widehat{G} = \{\chi:G\rightarrow \mathbb{C}: \chi~{\rm is~a~continuous~character~of~} G\}
\]
with the additive group operation $(\chi_1+\chi_2)(x)=\chi_1(x)\chi_2(x)$. For convenience we denote identity element of this group as ${\bf 0}$. The following result can be found in \cite{F, R, Rudin}.

\begin{theorem}
If $G$ is compact, then $\widehat{G}$ is discrete. If $G$ is discrete, then $\widehat{G}$ is compact.
\end{theorem}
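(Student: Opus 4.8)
The plan is to treat the two implications separately, in each case using the description of the topology on $\widehat{G}$ as the compact-open topology, i.e.\ the topology of uniform convergence on compact subsets of $G$. For the first implication, suppose $G$ is compact. Since $\widehat{G}$ is a topological group, it suffices to produce a neighborhood of the identity character ${\bf 0}$ containing no other character; translation then shows every point is isolated. I would consider the set
\[
N = \{\chi \in \widehat{G} : \operatorname{Re}\chi(x) > 0 \text{ for all } x \in G\},
\]
which is the basic compact-open set $W(G,U)$ with $K=G$ (compact, by hypothesis) and $U=\{z\in\T:\operatorname{Re} z>0\}$ open, hence open in $\widehat{G}$. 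The key step is then a purely circle-group observation: for any character $\chi$ the image $\chi(G)$ is a subgroup of $\T$, and the only subgroup of $\T$ contained in the open right half-plane is the trivial one $\{1\}$. Granting this, $\chi\in N$ forces $\chi(G)=\{1\}$, that is $\chi={\bf 0}$, so $N=\{{\bf 0}\}$ is open and $\widehat{G}$ is discrete.

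To prove the circle-group claim, suppose $H\le\T$ is nontrivial; replacing an element by its inverse if necessary, pick $e^{i\theta}\in H$ with $0<\theta\le\pi$. If $\theta\ge\pi/2$ then $e^{i\theta}$ already has nonpositive real part, so assume $0<\theta<\pi/2$. Because this step is smaller than the length $\pi$ of the left half-plane arc (arguments in $(\pi/2,3\pi/2)$), the successive multiples $\theta,2\theta,3\theta,\dots$ taken modulo $2\pi$ cannot skip over that arc, so some power $e^{in\theta}\in H$ lands there and has negative real part; hence $H\not\subseteq\{\operatorname{Re}>0\}$. This elementary case analysis is the only slightly delicate point and is the main obstacle in this direction.

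For the second implication, suppose $G$ is discrete. Then every character is automatically continuous, so $\widehat{G}=\operatorname{Hom}(G,\T)$ as a set, and the compact subsets of $G$ are exactly the finite subsets, whence the compact-open topology coincides with the topology of pointwise convergence. I would therefore regard $\widehat{G}$ as a subspace of the product $\T^{G}$ with the product topology, which is compact by Tychonoff's theorem. It remains to check that $\operatorname{Hom}(G,\T)$ is closed in $\T^{G}$: for each fixed pair $(g,h)\in G\times G$ the map $f\mapsto f(g+h)\,\overline{f(g)f(h)}$ is continuous into $\T$, and the homomorphism condition at $(g,h)$ is the preimage of $1$ under this map, hence closed. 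Intersecting these closed sets over all pairs $(g,h)$ exhibits $\widehat{G}$ as a closed subset of the compact space $\T^{G}$, so $\widehat{G}$ is compact, completing the proof.
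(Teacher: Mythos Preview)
Your argument is correct in both directions. Note, however, that the paper itself does not prove this theorem but simply quotes it as a classical fact from Folland, Hewitt--Ross, and Rudin; your proof---using that $\T$ has no small subgroups for the first implication, and Tychonoff together with closedness of $\operatorname{Hom}(G,\T)$ in $\T^G$ for the second---is exactly the standard textbook argument found in those references.
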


\begin{definition}
Let $H\subset G$ be a subgroup of $G$. We define the subgroup $H^{\perp}$, called the annihilator of $H$, as the collection of all characters which are trivial on the subgroup $H$,
\[
H^{\perp}=\{\chi\in\widehat{G}:\chi(h)=1~{\rm for~all}~ h\in H\}.
\]
\end{definition}

\begin{definition}
For all $f\in L^1(G)$, the function $\hat{f}$ defined on $\widehat{G}$ by
\[
\hat{f}(\chi)=\int_Gf(x)\overline{\chi(x)}dx	
\]
is called the Fourier transform of $f$. Here, $dx$ denotes a left invariant Haar measure on $G$, which is also right invariant since $G$ is abelian.
\end{definition}
 
We denote $\N=\{1,2,\ldots\}$ and $\N_0=\{0,1,2,\ldots\}$. Let ${\bf Epi}(G)$ be the semigroup of continuous group homomorphism of $G$ onto $G$. Then, we have the following elementary fact.

\begin{proposition}\label{prop:ker}
Let $G$ be a locally compact abelian group and $A\in {\bf Epi}(G)$. Then, the set $\bigcup\limits_{j\in\N_0}\ker A^j$ is dense in $G$ if and only if $\bigcap\limits_{j\in \N_0}\big(\ker A^j\big)^{\perp}=\{\bf 0\}$.
\end{proposition}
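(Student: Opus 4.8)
The plan is to deduce this from the standard Pontryagin duality principle that a subgroup of a locally compact abelian group is dense if and only if its annihilator is trivial, after observing that the annihilator of a union is the intersection of the annihilators. Before invoking duality I would record that $H := \bigcup_{j\in\N_0}\ker A^j$ is genuinely a subgroup of $G$: since $A$ is a homomorphism, $A^jx=\mathbf 0$ forces $A^{j+1}x=A(A^jx)=\mathbf 0$, so the kernels are nested, $\ker A^j\subseteq\ker A^{j+1}$, and an increasing union of subgroups is again a subgroup.

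The key step, and the one where the real content lies, is the duality lemma: for any subgroup $H$ of a locally compact abelian group $G$, $H$ is dense in $G$ if and only if $H^{\perp}=\{\mathbf 0\}$. Since characters are continuous, $H^{\perp}=(\overline H)^{\perp}$. By the Pontryagin duality theorem a closed subgroup satisfies $(\overline H)^{\perp\perp}=\overline H$ under the canonical identification $\widehat{\widehat G}\cong G$, and one has $\{\mathbf 0\}^{\perp}=G$. Hence $H^{\perp}=\{\mathbf 0\}$ forces $\overline H=(\overline H)^{\perp\perp}=\{\mathbf 0\}^{\perp}=G$, that is, $H$ is dense. The converse is immediate: if $\chi\in H^{\perp}$ then $\chi$ is trivial on $H$, hence by continuity on $\overline H=G$, so $\chi=\mathbf 0$.

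It then remains to identify $H^{\perp}$ with the intersection appearing in the statement. A character $\chi$ annihilates $\bigcup_{j}\ker A^j$ precisely when it annihilates every $\ker A^j$, so
\[
H^{\perp}=\Big(\bigcup_{j\in\N_0}\ker A^j\Big)^{\perp}=\bigcap_{j\in\N_0}\big(\ker A^j\big)^{\perp}.
\]
Combining this identity with the duality lemma yields exactly the claimed equivalence: $H$ is dense if and only if $\bigcap_{j\in\N_0}(\ker A^j)^{\perp}=\{\mathbf 0\}$. The only genuine obstacle is the duality lemma itself, whose proof rests on Pontryagin duality (specifically $(\overline H)^{\perp\perp}=\overline H$ for closed subgroups); everything else is a formal manipulation of annihilators and kernels.
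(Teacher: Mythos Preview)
Your proof is correct and follows essentially the same approach as the paper: both arguments reduce the equivalence to the duality principle that a subgroup is dense precisely when its annihilator is trivial, together with the identification $\big(\bigcup_j\ker A^j\big)^{\perp}=\bigcap_j(\ker A^j)^{\perp}$. The only cosmetic difference is in how the duality principle is justified---the paper appeals to $\widehat{G/H}\cong H^{\perp}$ to show a proper closed subgroup has nontrivial annihilator, while you use the double annihilator identity $(\overline H)^{\perp\perp}=\overline H$; these are equivalent standard consequences of Pontryagin duality.
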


\proof Suppose that $\bigcup\limits_{j\in\N_0}\ker A^j$ is dense in $G$. Take $\chi\in\bigcap\limits_{j\in \N_0}\big(\ker A^j\big)^{\perp}$, i.e., $\chi(x)=1$ for all $x \in\ker A^j$ and for all $j\in\N_0$. By continuity, we have $\chi(x)=1$ for all $x\in G$, which implies $\chi={\bf 0}$.

Conversely, suppose $H=\overline{\bigcup\limits_{j\in\N_0}\ker A^j}$ is a proper closed subgroup of $G$. Then, $G/H$ is nontrivial which implies $\widehat{G/H}$ is also nontrivial. By \cite[Theorem 2.1.2]{Rudin}, $\widehat{G/H}=H^{\perp}$ and hence $H^{\perp}$ is also nontrivial. Take ${\bf 0} \ne \chi\in H^{\perp}$. Then, $\chi(x)=1$ for all $ x\in \ker A^j$, $j\geq 0$. This implies $\chi\in \big(\ker A^j\big)^{\perp}$ for all $j\geq 0$, which gives $\chi\in\bigcap\limits_{j\in\N_0}\big(\ker A^j\big)^{\perp}$. Therefore, $\bigcap\limits_{j\in\N_0}\big(\ker A^j\big)^{\perp}\neq \{\bf 0\}$. 
\qed

As in \cite{BR}, let ${\bf Epick}(G)$ be the collection of all $A\in {\bf Epi}(G)$ having compact kernel. Given $G$, ${\bf Epick}(G)$ is a semigroup under composition. Moreover, by \cite[Theorem~6.2]{BR} there is a semigroup homomorphism $\Delta: {\bf Epick}(G)\longrightarrow (0, \infty)$ such that
\begin{equation}\label{homo}
\int_G(f\circ A)(x)dx=\Delta(A)\int_Gf(x)dx
\end{equation}
for all integrable functions $f$ on $G$ with respect to the Haar measure $dx$. To obtain $\Delta(A)$, observe that $f \mapsto \int_G(f\circ A)(x)\, dx$ defines a positive translation-invariant linear functional on the space $C_{c}(G)$ of continuous functions on $G$ with compact support and use the uniqueness of Haar measure up to a normalization \cite[Theorem (15.5)]{R}.  

\begin{definition}\label{dep}
Let $G$ be a locally compact abelian group and $A\in {\bf Epi}(G)$ has a finite kernel. Define the periodization operator $P$ acting on functions $f$ on $G$ by
\[
Pf(x) = \sum\limits_{a\in \ker A}f(y+a) \qquad\text{where }y\in A^{-1}x, x\in G.
\]
\end{definition}

\begin{proposition}\label{lem:integral}
For all integrable functions $f$ on $G$, the periodization operator $P$ satisfies 
\[
\int_GPf(x)dx= |\ker A| (\Delta(A))^{-1}\int_Gf(x)dx.
\]
\end{proposition}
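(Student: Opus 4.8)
The plan is to reduce everything to the change-of-variables identity \eqref{homo} by composing $Pf$ with $A$. First I would check that $Pf$ is well defined, i.e.\ independent of the choice of preimage $y\in A^{-1}x$: if $Ay=Ay'=x$ then $y'-y\in\ker A$, and since $\ker A$ is a group the substitution $a\mapsto a+(y'-y)$ merely permutes the finite sum $\sum_{a\in\ker A}f(y+a)$. The same periodicity argument shows that $y\mapsto\sum_{a\in\ker A}f(y+a)$ is invariant under translation by $\ker A$, so it descends to a function on $G/\ker A$; composing with the topological isomorphism $G/\ker A\cong G$ induced by $A$ (an open map by the open mapping theorem, since $G$ is second countable) shows that $Pf$ is measurable whenever $f$ is.

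The key observation is the identity
\[
(Pf\circ A)(x)=\sum_{a\in\ker A}f(x+a)\qquad\text{for }x\in G,
\]
obtained by evaluating the definition of $Pf$ at the point $Ax$ and choosing $y=x$ as a preimage. I would then apply \eqref{homo} to the function $Pf$, giving
\[
\int_G(Pf\circ A)(x)\,dx=\Delta(A)\int_GPf(x)\,dx,
\]
and evaluate the left-hand side using the displayed identity together with the translation invariance of the Haar measure:
\[
\int_G\sum_{a\in\ker A}f(x+a)\,dx=\sum_{a\in\ker A}\int_Gf(x+a)\,dx=|\ker A|\int_Gf(x)\,dx.
\]
Combining the two displays and dividing by $\Delta(A)$ yields the claimed formula.

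The one point requiring care --- and the main obstacle --- is justifying that $Pf$ is integrable so that \eqref{homo} applies to it, since a priori \eqref{homo} is stated for integrable functions. I would handle this by first running the argument for the nonnegative function $|f|$: the identity \eqref{homo} extended to nonnegative measurable functions (valid in $[0,\infty]$ by the uniqueness of Haar measure, or by monotone approximation with simple functions) gives $\int_G P|f|\,dx=|\ker A|\,(\Delta(A))^{-1}\int_G|f|\,dx<\infty$. Hence $P|f|$ is integrable, and since $|Pf|\le P|f|$ pointwise, $Pf$ is integrable as well; the computation above then proceeds verbatim to establish the stated equality for $f$.
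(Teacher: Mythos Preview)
Your proof is correct and follows essentially the same approach as the paper: apply \eqref{homo} to $Pf$, evaluate $\int_G Pf(Ax)\,dx$ via translation invariance as $|\ker A|\int_G f(x)\,dx$, and combine. You supply additional rigor (well-definedness, measurability, and integrability of $Pf$) that the paper omits, but the core argument is identical.
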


\proof By (\ref{homo}), we have
\begin{equation}\label{eqn:ker1}
\int_GPf(Ax)dx= \Delta(A)\int_GPf(x)dx.
\end{equation}
Using the translation invariance of Haar measure, we have
\begin{equation}\label{eqn:ker2}
\int_GPf(Ax)dx = \sum\limits_{a\in \ker A}\int_Gf(x+a)dx= |\ker A| \int_Gf(x)dx.
\end{equation}
The result follows from equations~(\ref{eqn:ker1}) and (\ref{eqn:ker2}).
\qed

In this article we mainly concentrate on compact abelian groups.  If $G$ is compact, then by taking $f \equiv 1$, we deduce that  for any epimorphism $A$ we have $\Delta(A)=1$ in Proposition \ref{lem:integral}. In fact, the {\bf standing assumptions} in the paper are that:
\begin{itemize}
\item $G$ is a compact abelian group, 
\item $A\in {\bf Epi}(G)$ has a finite kernel, 
\item $\bigcup\limits_{j\in\N_0}\ker A^j$ is dense in $G$.
\end{itemize}

First, we will consider the classical case when $G$ is a finite dimensional torus $\T^d= \R^d/\Z^d$. 
Let $A$ be a $d\times d$ matrix with integer entries. Then, $A$ induces an endomorphism $T=T_A$ of $\T^d = \R^d/\Z^d$, and every endomorphism of $\T^d$ is induced in this way. Moreover, $T_A$ is an epimorphism of $\T^d$ if and only if $A$ is an invertible matrix, see \cite[Theorem 0.15]{Wal}.
The following result, which was communicated to the authors by J. Kwapisz, classifies all epimorphisms on $\T^d$ satisfying our standing assumptions. 

\begin{theorem}\label{tdc1}
Let $A$ be a $d\times d$ invertible matrix with integer entries. Suppose that $T_A$ is a surjective endomorphism (epimorphism) on $\T^d$. Then the following are equivalent:
\begin{enumerate}[(i)]
\item
$\overline{\{x \in \T^d:  (T_A)^n x = 0 \text{ for some }n \geq 0\}} \ne \T^d$,
\item
$A$ has an eigenvalue $\lambda \in \C$ which is an integral algebraic unit, i.e., both $\lambda$ and $1/\lambda$ are algebraic integers.
\end{enumerate}
\end{theorem}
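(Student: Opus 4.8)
The plan is to translate the density condition into the dual group $\widehat{\T^d}\cong\Z^d$ by means of Proposition~\ref{prop:ker}, and then to reduce the whole statement to a purely lattice-theoretic fact about the integer matrix. First I would record that the character group of $\T^d=\R^d/\Z^d$ is $\Z^d$, with $m\in\Z^d$ acting by $[x]\mapsto e^{2\pi i\langle m,x\rangle}$. Since $A$ is invertible over $\Q$, a direct computation gives $\ker(T_A)^n=A^{-n}\Z^d/\Z^d$, and a character $m$ annihilates this subgroup exactly when $\langle m,A^{-n}k\rangle\in\Z$ for every $k\in\Z^d$, i.e.\ when $(A^{\top})^{-n}m\in\Z^d$. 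Hence $\big(\ker(T_A)^n\big)^{\perp}=(A^{\top})^n\Z^d$ inside $\Z^d$, and Proposition~\ref{prop:ker} shows that (i) (failure of density) is equivalent to
\[
\bigcap_{n\in\N_0}(A^{\top})^n\Z^d\neq\{\mathbf{0}\}.
\]
Writing $B=A^{\top}$, which has the same characteristic polynomial, hence the same eigenvalues, as $A$, and noting that eigenvalues of an integer matrix are automatically algebraic integers, it remains to prove the claim: $\bigcap_n B^n\Z^d\neq\{\mathbf{0}\}$ if and only if $B$ has an eigenvalue $\lambda$ with $1/\lambda$ also an algebraic integer.

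For the forward implication I would set $L=\bigcap_{n\geq0}B^n\Z^d$. Because the chain $B^n\Z^d$ is decreasing and $B$ is injective on $\Q^d$, one checks that $BL=L$; thus $B$ restricts to a surjective endomorphism of the finitely generated free abelian group $L$, which is therefore an automorphism of $L$ (a surjective endomorphism of $\Z^r$ is invertible, of determinant $\pm1$). Consequently $B|_L$ and its inverse are both represented by integer matrices, so every eigenvalue of $B|_L$, each of which is an eigenvalue of $B$, is an algebraic integer whose reciprocal, being an eigenvalue of $(B|_L)^{-1}$, is also an algebraic integer. If $L\neq\{\mathbf{0}\}$ there is at least one such eigenvalue, which gives (ii).

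For the converse I would start from an eigenvalue $\lambda$ of $B$ that is an algebraic unit, let $p\in\Z[t]$ be its (monic, integral) minimal polynomial, and use that $\lambda$ being a unit forces $p(0)=\pm1$, since the norm $\pm p(0)$ of $\lambda$ is then a rational integer unit. The primary component $W=\ker p(B)^d\subseteq\Q^d$ is a nonzero $B$-invariant rational subspace on which $B$ has characteristic polynomial a power of $p$, so that $\det(B|_W)=\pm1$. Then $\Lambda:=W\cap\Z^d$ is a $B$-invariant sublattice of full rank in $W$, and since in a $\Z$-basis of $\Lambda$ the map $B|_\Lambda$ is an integer matrix of determinant $\det(B|_W)=\pm1$, it is an automorphism of $\Lambda$; hence $B^n\Lambda=\Lambda\subseteq\Z^d$ for all $n$, which yields $\{\mathbf{0}\}\neq\Lambda\subseteq\bigcap_n B^n\Z^d$. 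Combining the two implications with the dual reduction proves the theorem.

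The main obstacle is the converse lattice claim: one must produce an honest nonzero integer vector surviving all the iterates $B^n\Z^d$, and the clean route is to locate a $B$-invariant sublattice on which $B$ acts invertibly. The delicate points are that the primary component $W$ really is defined over $\Q$, so that $W\cap\Z^d$ has full rank and is $B$-stable, and that the hypothesis ``$\lambda$ is a unit'' translates, via Gauss's lemma keeping the minimal polynomial integral, into the constant-term condition $p(0)=\pm1$ that makes $\det(B|_W)=\pm1$.
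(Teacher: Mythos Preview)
Your proof is correct and takes a genuinely different route from the paper. The paper argues on the torus side: it studies the closed subgroup $H=\overline{\bigcup_n\ker(T_A)^n}$, passes to its identity component $H_0$ (a subtorus), and uses a topological lemma about preimages of closures under local homeomorphisms to show $(T_A)^{-1}(H_0)=H_0$; the factorization of the characteristic polynomial then comes from the quotient torus $\T^d/H_0$. You instead dualize immediately via Proposition~\ref{prop:ker}, reducing (i) to the lattice condition $\bigcap_n B^n\Z^d\neq\{0\}$ with $B=A^\top$, and argue purely algebraically: for $(i)\Rightarrow(ii)$ the intersection $L$ is a $B$-stable free abelian group on which $B$ is surjective, hence a $\Z$-automorphism, forcing unit eigenvalues; for $(ii)\Rightarrow(i)$ the primary component $W=\ker p(B)^d$ over $\Q$ gives a rational invariant subspace with $\det(B|_W)=\pm1$, so $\Lambda=W\cap\Z^d$ is a nonzero $B$-invariant lattice fixed by all $B^n$. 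Your approach is more self-contained and avoids the topological Lemma~\ref{tdc2} and the connected-component analysis; the paper's approach, on the other hand, identifies the obstructing subtorus $H_0$ explicitly and may be viewed as more geometric. Both arguments ultimately hinge on the same algebraic fact---that an integer matrix acts as a $\Z$-automorphism on a sublattice precisely when its restriction has unit determinant---but locate that sublattice on opposite sides of Pontryagin duality.
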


We were unable to find Theorem \ref{tdc1} in the literature and hence we present its proof. First, we need to show a basic lemma.

\begin{lemma}\label{tdc2}
Let $A$ be a $d\times d$ invertible matrix with integer entries. Suppose that $T_A$ is a surjective endomorphism (epimorphism) on $\T^d$ and $K\subset \T^d$ is a set. Then 
$$\overline{T^{-1}_A(K)}=T^{-1}_A(\overline{K})$$
\end{lemma}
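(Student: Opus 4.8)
The plan is to prove the two inclusions separately, noting that only one of them uses a special property of $T_A$. For the inclusion $\overline{T_A^{-1}(K)} \subseteq T_A^{-1}(\overline{K})$, I would use continuity alone: since $T_A$ is continuous and $\overline{K}$ is closed, the preimage $T_A^{-1}(\overline{K})$ is a closed set containing $T_A^{-1}(K)$, and therefore it contains the closure $\overline{T_A^{-1}(K)}$. This direction holds for any continuous map and requires no hypothesis on $A$.

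The reverse inclusion $T_A^{-1}(\overline{K}) \subseteq \overline{T_A^{-1}(K)}$ is the substantive part, and I would derive it from the fact that $T_A$ is an open map. To see that $T_A$ is open, recall that it fits into the relation $T_A \circ \pi = \pi \circ A$, where $\pi : \R^d \to \T^d$ is the canonical quotient map and $A : \R^d \to \R^d$ is the linear map. Since $A$ is invertible, it is a homeomorphism of $\R^d$ and hence carries open sets to open sets; combined with the openness of the quotient map $\pi$, this shows that for every open $V \subseteq \T^d$ the image $T_A(V) = \pi(A(\pi^{-1}(V)))$ is open. (Alternatively, openness follows at once from the open mapping theorem for continuous surjective homomorphisms between locally compact, $\sigma$-compact groups, which covers the present compact case.)

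With openness in hand, the reverse inclusion is a short neighborhood argument. Let $x \in T_A^{-1}(\overline{K})$, so that $T_A(x) \in \overline{K}$, and let $U$ be any open neighborhood of $x$. Then $T_A(U)$ is an open neighborhood of $T_A(x)$, and since $T_A(x) \in \overline{K}$ there is a point $k \in K \cap T_A(U)$. Writing $k = T_A(u)$ with $u \in U$, we obtain $u \in U \cap T_A^{-1}(K)$; thus every neighborhood of $x$ meets $T_A^{-1}(K)$, whence $x \in \overline{T_A^{-1}(K)}$.

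I expect the only genuine obstacle to be the verification that $T_A$ is open, which is precisely where the invertibility of $A$ enters, since for a non-invertible $A$ the map $T_A$ is not even surjective. Once openness is established, both inclusions are routine, and together they yield the claimed equality.
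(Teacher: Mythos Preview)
Your proof is correct, and the first inclusion is handled exactly as in the paper. For the reverse inclusion you take a slightly different route: the paper invokes that $T_A$ is a \emph{local homeomorphism}, picks a sequence $y_n \in K$ converging to $T_A(x)$, and pulls it back through a local inverse to produce a sequence in $T_A^{-1}(K)$ converging to $x$. You instead use only that $T_A$ is an \emph{open map} and give a direct neighborhood argument. Your version is marginally cleaner and more portable: it avoids sequences (hence any implicit appeal to first countability of $\T^d$) and it uses a strictly weaker hypothesis on $T_A$, since every local homeomorphism is open but not conversely. The paper's argument, on the other hand, gives a little more: it actually produces preimages converging to $x$, which is natural once you know $T_A$ is a finite covering map. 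Either approach is perfectly adequate here.
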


\begin{proof}
One side of the inclusion is obvious, i.e., $\overline{T^{-1}_A(K)}\subset T^{-1}_A(\overline{K})$ since $T^{-1}_A(K)\subset T^{-1}_A(\overline{K})$. We claim that 
$$T^{-1}_A(\overline{K})\subset \overline{T^{-1}_A(K)}.$$
To prove this, let $x\in T^{-1}_A(\overline{K})$. Then we have $y\in \overline{K}$ such that $y=T_Ax$. There exists a sequence $(y_n)$ in $K$ which converges to $y$. Since $T_A$ is a local homeomorphism, there exists a neighborhood $W$ of $x$ and a neighborhood $U$ of $y$ such that $T_A|_W: W \to U$ is a homeomorphism. Hence, $(T_A|_W)^{-1}y_n$ converges to $(T_A|_W)^{-1}y$. Since $(T_A|_W)^{-1}y=x$, we have $x\in \overline{(T_A|_W)^{-1}(K)}\subset \overline{T_A^{-1}(K)}$.
\end{proof}
\vspace{.3cm}

\begin{proof}[Proof of Theorem~\ref{tdc1}.]
Since $A$ is invertible, it induces a surjective endomorphism (epimorphism) $T_A$ on $\mathbb T^d$. Let $q=|\det A|$. Then, $T_A$ is $q$-to-$1$ mapping. That is, for every $x\in \T^d$, $(T_A)^{-1}(x)$ consists of $q$ points. Also $T_A$ is a  local homeomorphism. If $q=1$, then the result is trivial since we necessarily have $\ker T_A=\{0\}$ and all eigenvalues of $A$ are integral algebraic units. Hence, we can assume that $q\ge 2$.

Let 
$$H:=\overline{\{x \in \T^d:  (T_A)^n x = 0 \text{ for some }n \geq 0\}}.$$
Then, $H$ is a closed subgroup of $\T^d$ and $(T_A)^{-1}(H)=H$, by Lemma~\ref{tdc2}.
Let $H_0$ be the connected component of $H$ containing $0$.
Thus, $H_0$ is a closed connected subgroup of $\T^d$, hence a subtorus. Moreover, $G:=H/H_0$ is a discrete compact group on which $T_A$ induces a surjective endomorphism, hence an automorphism. We also have $T_A(H_0)=H_0$. 

We claim that 
\begin{equation}\label{tdc20}
(T_A)^{-1}(H_0)=H_0.
\end{equation} Suppose that $H_0 \ne (T_A)^{-1}(H_0)$. Since $H_0 \subset (T_A)^{-1}(H_0) \subset H$, there exists $h\in H \setminus H_0$ such that $h+H_0 \subset (T_A)^{-1}(H_0)$. Thus, $T_A(h+H_0) \subset H_0$, which contradicts the fact that $T_A$ is an automorphism on $G$.

A subtorus $H_0 \subset \T^d$ lifts to a rational $A$ invariant linear subspace $K_0 \subset \R^d$, i.e., $K_0$ a linear span of rational vectors and $A(K_0) = K_0$. The formula \eqref{tdc20} implies that the endomorphism $T_A$ restricted to the subtorus $H_0$ is $q$-to-$1$ mapping. Consequently, the linear map $A$ restricted to $K_0$ has determinant $\pm q$. 

The matrix $A$ also induces a linear mapping $\tilde A: \R^d/K_0 \to \R^d/K_0$, which corresponds to endomorphism of the torus $\T^d/H_0$. Hence, $\tilde A$ can be identified with an integer matrix, see \cite[Theorem~0.15]{Wal}. The characteristic polynomial of $A$ is the product of characteristic polynomials of $A|_{K_0}$ and $\tilde A$. These polynomials have all integer coefficients. Since the constant coefficients of $A$ and $A|_{K_0}$ are $\pm q$, the characteristic polynomial of $\tilde A$ is an integral monic polynomial with the constant term $\pm 1$. This proves $(i) \implies (ii)$.

To prove the converse implication we assume $(ii)$. Thus, the characteristic polynomial $p \in \Z[x]$ of $A$ is divisible by a monic polynomial $p_0\in \Z[x]$ with constant coefficient $\pm 1$. Hence, $p_1:=p/p_0 \in \Z[x]$ is a monic polynomial with constant coefficient $\pm q$. Consider the invariant subspaces $K_0$ and $K_1$ corresponding to $p_0$ and $p_1$, i.e.,
\[
K_0= \{x\in \R^d: p_0[A]x =0 \}, \qquad
K_1= \{x\in \R^d: p_1[A]x =0 \}.
\]
Then, $K_0$ and $K_1$ are rational subspaces of $\R^d$ which are invariant under $A$. Moreover, the characteristic polynomial of $A$ restricted to $K_i$ is $p_i$, $i=0,1$. The matrix $A$ has a block diagonal form with respect to subspaces $K_0$ and $K_1$. So does any power $A^n$, $n\ge 1$. 
Let $H_i$ be a subtorus of $\T^d$ corresponding to a subspace $K_i$, $i=0,1$. Since $A|_{K_0}$ has determinant $\pm 1$, $T_A|_{H_0}$ is an automorphism of $H_0$. Hence, $\ker T_A \subset H_1$. Likewise, $\ker (T_A)^n \subset H_1$ for any $n\ge 1$. Since $H_1$ is a proper subtorus, this yields $(i)$.
\end{proof}

As a corollary of Theorem \ref{tdc1} we obtain

\begin{corollary}\label{cori}
Let $A$ be a $d\times d$ invertible matrix with integer entries such that no eigenvalues of $A$ are  integral algebraic units. Then the epimorphism $T_A$ satisfies our standing assumption, i.e.,
\begin{equation}\label{dense}
\overline{\{x \in \T^d:  (T_A)^n x = 0 \text{ for some }n \geq 0\}} = \T^d.
\end{equation}
In particular, for  any expansive matrix $A$, i.e., all its eigenvalues $\lambda$ of $A$ satisfy $|\lambda|>1$, the corresponding epimorphism $T_A$ satisfies \eqref{dense}.
\end{corollary}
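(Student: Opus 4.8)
The plan is to obtain the first assertion as an immediate consequence of Theorem~\ref{tdc1} and then to reduce the ``in particular'' clause to a short computation with algebraic norms.

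For the first assertion, observe that the set appearing in \eqref{dense} is precisely $\bigcup_{n\ge 0}\ker (T_A)^n$, so \eqref{dense} is exactly the negation of condition $(i)$ of Theorem~\ref{tdc1}. By hypothesis no eigenvalue of $A$ is an integral algebraic unit, i.e. condition $(ii)$ fails. Since Theorem~\ref{tdc1} asserts $(i)\Leftrightarrow(ii)$, the failure of $(ii)$ forces the failure of $(i)$, which is \eqref{dense}. Thus the first statement is just the contrapositive of the implication $(i)\Rightarrow(ii)$ applied to the present hypothesis, and requires no further work.

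For the ``in particular'' clause it suffices, in view of what we just proved, to show that an expansive integer matrix $A$ has no eigenvalue that is an integral algebraic unit. I would argue by contradiction: suppose $\lambda$ is an eigenvalue of $A$ with both $\lambda$ and $1/\lambda$ algebraic integers. Two facts drive the argument. First, $\lambda$ is a root of the characteristic polynomial of $A$, a monic polynomial in $\Z[x]$, so the minimal polynomial $m_\lambda$ of $\lambda$ over $\Q$ divides this characteristic polynomial; consequently every Galois conjugate of $\lambda$ (that is, every root of $m_\lambda$) is again an eigenvalue of $A$ and therefore has modulus strictly greater than $1$ by expansiveness. Second, since $\lambda$ and $1/\lambda$ both lie in the number field $\Q(\lambda)$ and are algebraic integers, their field norms $N(\lambda)$ and $N(1/\lambda)$ are rational integers with $N(\lambda)\,N(1/\lambda)=N(1)=1$, whence $N(\lambda)=\pm 1$.

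Combining the two facts yields the contradiction: writing $N(\lambda)=\prod_{\mu}\mu$ as the product over the conjugates $\mu$ of $\lambda$, we get $|N(\lambda)|=\prod_{\mu}|\mu|>1$, because each factor exceeds $1$ and there is at least one factor, contradicting $|N(\lambda)|=1$. Hence no eigenvalue is an integral algebraic unit, and the first assertion then gives \eqref{dense}. The one point that genuinely uses expansiveness---and the step I expect to be the crux---is the observation that \emph{all} conjugates of $\lambda$, not merely $\lambda$ itself, are eigenvalues of $A$: a lone eigenvalue of modulus larger than $1$ may well be a unit (for instance $1+\sqrt2$), and it is only the requirement that every eigenvalue, hence every conjugate of $\lambda$, have modulus exceeding $1$ that makes the norm argument go through.
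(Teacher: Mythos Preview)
Your proof is correct and follows essentially the same route as the paper. For the first assertion both arguments invoke the contrapositive of Theorem~\ref{tdc1}; for the expansive case both observe that the minimal polynomial of an eigenvalue $\lambda$ divides the characteristic polynomial of $A$, so all conjugates of $\lambda$ are eigenvalues of modulus greater than $1$, and then derive a contradiction from the fact that a unit has norm $\pm 1$---the paper phrases this last step via the constant coefficient of the minimal polynomial rather than the field norm, but the two formulations are equivalent.
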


\proof If $A$ is a $d\times d$ invertible matrix with integer entries such that no eigenvalues of $A$ are  integral algebraic units then by Theorem \ref{tdc1}, \eqref{dense} holds. To prove the second part of the corollary, assume that $A$ is expansive. But suppose that \eqref{dense} fails, i.e., 
\[
\overline{\{x \in \T^d:  (T_A)^n x = 0 \text{ for some }n \geq 0\}} \ne \T^d.
\]
Then by Theorem \ref{tdc1}, $A$ has an eigenvalue $\lambda\in\C$ which is an integral algebraic unit, i.e., both $\lambda$ and $\frac{1}{\lambda}$ are algebraic integers. Hence, the characteristic polynomial of $A$ is divisible by the minimal monic polynomial $p$ of $\lambda$, which has integer coefficients. Since $1/\lambda$ is also an algebraic integer, the constant coefficient of $p$ is $\pm 1$. Hence, the product of eigenvalues of $A$, which correspond to the roots of $p$, is equal to $\pm 1$. This gives a contradiction with the fact that $A$ is expansive.
\qed

The well-known doubling map illustrates the essence of our standing assumptions.

\begin{example}
Let $G=\T =\R/\Z$. Let $m$ be an integer such that $|m| \ge 2$. Define an epimorphism $A:\T \to \T$ as a multiplication map $A(x)=mx \mod 1$, $x\in \T$. Then, $\ker A$ is finite, has cardinality $|m|$, and for any $j\in\N$, 
\[
\ker A^j = \{k/m^j + \Z: k =0, 1,\ldots, |m|^j-1\}.
\]
Hence, the pair $(G,A)$ satisfies the standing assumptions. In particular, when $m=2$, then $A:\T \to \T$ is a well-known doubling map $A(x)=2x \mod 1$.
\end{example}

Next we give more examples of epimorphisms on compact abelian groups satisfying our standing hypothesis.

\begin{example}
For a fixed natural number $N\ge 2$, let $\Z_N=\frac{1}{N}\Z/\Z\simeq\{0, \frac{1}{N}, \frac{2}{N}, \ldots, \frac{N-1}{N}\}$. 
Consider $G=(\Z_N)^{\N}$ equipped with the product topology. By Tychonoff's Theorem $G$ is compact. We define the backward shift mapping $S$ on $G$, i.e., $S(x_1,x_2,\ldots)=(x_2,x_3,\ldots)$. It is straightforward to verify that $S$ satisfies the standing assumptions. In fact, we have a more general example below.
\end{example}

\begin{example}
Consider again $G=(\Z_N)^{\N}$, for fixed natural number $N\ge 2$. Let $A$ be an upper triangular matrix such that main diagonal elements are zero, the upper diagonal elements are 1, and $A$ is the band matrix with upper bandwidth $k\in\N$.  More precisely, 
\begin{equation}\label{bmatrix}
A=\begin{bmatrix}
0 & 1 & a_{1,3} & a_{1,4}  & \dots&a_{1,k+1}&0&0&0&\dots  \\
0 & 0 & 1 & a_{2,4} & a_{2,5} & \dots  &a_{2,k+2}&0&0&\dots  \\
0 & 0 & 0 & 1& a_{3,5} &a_{3,6} & \dots&a_{3,k+3} &0&\dots  \\
\vdots & \vdots &\vdots & \ddots & \ddots&\ddots&\cdots &\ddots&\ddots&\cdots
\end{bmatrix}.
\end{equation}
With the help of the above matrix $A$, we define a homomorphism $T_A$ on $G$ by
\[
T_A(Y)=AY
\]
where $Y=(y_1, y_2, y_3, \ldots)\in G$ and $AY=\big(\sum\limits_{j=1}^{\infty}a_{1j}y_j, \sum\limits_{j=1}^{\infty}a_{2j}y_j, \ldots\big)$. 
The following lemma shows that $T_A$ satisfies our standing assumptions.
\end{example}

\begin{lemma}\label{bandmatrix}
	Let $G=(\Z_N)^{\N}$. Suppose that $A$ is an $\N\times\N$ matrix with integer entries such that each row has finitely many non zero entries and $T_A:G\rightarrow G$ is defined by 
	$$T_A(Y)=AY \qquad \text{for } Y \in G.$$ Then
	\begin{enumerate}
		\item[(i)]  $T_A$ is a well defined continuous homomorphism $G \to G$.
		\item[(ii)] If $A$ is of the form (\ref{bmatrix}), then $T_A$ is an epimorphism.
		\item[(iii)] If $A$ is of the form (\ref{bmatrix}), then $\ker T_A$ is finite and its cardinality is bounded by
		\begin{equation}\label{bound}
		|\ker T_A|\le N^{k}.
		\end{equation}
		\end{enumerate}
\end{lemma}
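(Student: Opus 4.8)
The plan is to handle the three parts in turn, the first two being direct and the third carrying the real content.

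For (i) I would check well-definedness, additivity, and continuity coordinate by coordinate. Each output coordinate $(AY)_i=\sum_j a_{ij}y_j$ is a finite sum since row $i$ has finitely many nonzero entries, and because $\Z_N$ is a $\Z$-module each product $a_{ij}y_j$ and the finite sum lie in $\Z_N$; hence $T_A$ maps $G$ into $G$. Additivity is immediate from $\sum_j a_{ij}(y_j+z_j)=\sum_j a_{ij}y_j+\sum_j a_{ij}z_j$. For continuity I would invoke the universal property of the product topology: it suffices that each $\pi_i\circ T_A$ is continuous, and this map factors through the projection of $G$ onto the finitely many coordinates $j$ with $a_{ij}\ne 0$ followed by a $\Z$-linear map on a finite discrete group, which is automatically continuous.

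For (ii) the plan is to show the image is dense and then invoke that it is closed: since $T_A$ is continuous and $G$ is compact, $T_A(G)$ is compact, hence closed, so it suffices to hit every finitely supported $Z=(z_1,\ldots,z_M,0,0,\ldots)$. Writing $(AY)_i=y_{i+1}+\sum_{l=2}^{k}a_{i,i+l}y_{i+l}$ (using $a_{i,i+1}=1$), I would set $y_j=0$ for $j>M+1$ and solve $(AY)_i=z_i$ for $i=M,M-1,\ldots,1$, which successively isolates $y_{M+1},y_M,\ldots,y_2$ since the leading coefficient of each equation is $1$; finally take $y_1=0$. The equations with $i>M$ then hold automatically because every coordinate they involve vanishes. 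As finitely supported sequences are dense in $G$, surjectivity follows.

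Part (iii) is the heart of the matter. The kernel consists of $Y$ with $y_{i+1}=-\sum_{l=2}^{k}a_{i,i+l}y_{i+l}$ for all $i\ge 1$, and the obstacle is that this recursion expresses a low-index coordinate in terms of higher ones, so — in contrast to the torus case — the kernel need not inject into its first $k$ coordinates (one can exhibit nonzero kernel elements vanishing on $y_1,\ldots,y_k$), and which coordinates are free depends on the matrix. My plan is therefore to bound finite truncations and pass to the limit. For each $n$, let $L_n\subseteq(\Z_N)^n$ be the solution set of the equations $(AY)_i=0$ for $1\le i\le n-k$, i.e.\ exactly those equations involving only $y_1,\ldots,y_n$. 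Since equation $i$ has leading variable $y_{i+1}$ with coefficient $a_{i,i+1}=1$, back-substitution from $i=n-k$ down to $i=1$ uniquely determines $y_{n-k+1},\ldots,y_2$ from the remaining $k$ coordinates $y_1,y_{n-k+2},\ldots,y_n$, which may be chosen freely; hence $|L_n|\le N^k$ for every $n$. The coordinate projection $P_n$ maps $\ker T_A$ into $L_n$, and on any fixed finite subset of $\ker T_A$ the map $P_n$ becomes injective once $n$ exceeds the largest coordinate distinguishing its points, so every finite subset of $\ker T_A$ has at most $N^k$ elements; consequently $\ker T_A$ is finite with $|\ker T_A|\le N^k$. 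I expect the main effort to lie in the clean bookkeeping of the back-substitution yielding $|L_n|=N^k$ and in the separation argument that upgrades the truncated bounds to a bound on the full kernel.
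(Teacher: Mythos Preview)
Your proposal is correct and takes essentially the same approach as the paper. The only differences are cosmetic: you argue continuity via the universal property of the product topology rather than via an explicit metric, deduce surjectivity from closedness of the compact image rather than by a subsequence extraction, and spell out the truncation-and-separation argument in (iii) more carefully than the paper's terse final paragraph.
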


\proof 
Since each row of $A$ has finitely many non-zero entries, $AY$ is well defined for any $Y \in G$, and $T_A$ is a homomorphism. The group $G$ is metrizable with metric given by
\[
d(X,Y)= \sum_{i=1}^{\infty} \frac{|x_i-y_i|}{2^i}, \qquad X=(x_1,x_2,\ldots), Y=(y_1,y_2,\ldots) \in G.
\]
For any $n\in\N$ we can find $m\in \N$ such that $a_{i,j}=0$ for all $1\le i \le n$ and $j>m$. Hence, if $X=(x_1,x_2,\ldots) \in G$ satisfies $x_i=0$ for $1\le i \le m$, then $d(AX,0) \le \sum_{i=n+1}^\infty 2^{-i} =2^{-n}$. Hence, $T_A$ is continuous at $0\in G$ and thus everywhere.

To prove $(ii)$, we define the projection $p_n:G\rightarrow G$ by $$p_n(x_1, x_2, \ldots)=(x_1, x_2, \ldots, x_n, 0, 0, \ldots).$$ We have following two claims:
\begin{itemize}
	\item[] Claim $(a)$: $p_n\circ T_A(G)=p_n(G)$
	\item[] Claim $(b)$: $T_A(G)=G$
	\end{itemize}
To prove Claim $(a)$, take any $Y=(y_1, y_2, \ldots)\in G$. 
By \eqref{bmatrix} for any $X=(x_1, x_2, \ldots)\in G$ we have $$p_n\circ T_A(x_1, x_2, \ldots)=\bigg(x_2+\sum\limits_{j=3}^{k+1}a_{1,j}x_j, \ldots,  x_{n+1}+ \sum\limits_{j=n+2}^{k+n+1}a_{n,j}x_j, 0, \ldots\bigg).$$
We can find $X\in G$ satisfying $p_n \circ T_A(X)=p_n(Y)$ by back substitution. Indeed, let $x_{n+1}=y_n$ and $x_i=0$ for $i>n+1$. Having defined $x_i$ for $i>m$, we let 
\[
x_m=y_{m-1}- \sum\limits_{j=m+1}^{k+m}a_{m-1,j}x_j.
\]

Proof of Claim $(b)$. For fixed $Y\in G$, we find a sequence $(X_n)_{n=1}^{\infty}$ in $G$ such that $$p_n\circ T_A(X_n)=p_n(Y).$$ By the compactness there exists a subsequence $(X_{n_k})$ which converges to $X$ such that $$p_{n_k}\circ T_A(X_{n_k})=p_{n_k}(Y).$$ By continuity of $T_A$, $p_{n_k}\circ T_A(X_{n_k})$ converges to $T_A(X)$ and $p_{n_k}(Y)$ converges to $Y$ as $k\rightarrow\infty$. Hence, we have $$T_A(X)=Y.$$
Proof of $(iii)$. We claim that there are exactly $N^k$ solutions of the equation
\begin{equation}\label{pia}
p_n \circ A (X) =0 \qquad \text{for }X \in p_{n+k}(G).
\end{equation}
Indeed, if we assign values of $x_{n+2}, \ldots, x_{n+k}$, then the value of $x_{n+1}$ is uniquely determined by the $n$'th row of $A$. By back substitution, the values of $x_2,\ldots, x_{n}$ are also uniquely determined. Finally, $x_1$ can take any value in $\Z_N$. Since we can assign $k$ values in $\Z_N$, the number of solutions of \eqref{pia} is $N^k$. 
Since $A$ is a band matrix with bandwidth $k$, if $X\in \ker T_A$, then $p_{n+k}(X)$ is a solution of \eqref{pia}. This implies \eqref{bound}.
\qed

\begin{example}
Consider $G=\T^d\times (\Z_N)^{\N}$, for fixed $N\ge 2$. Let $B$ be a $d\times d$ integer invertible matrix, which induces an epimorphism $T_B$ on $\T^d$. Assume $B$ has no eigenvalues which are algebraic integral units. Let $i: (\Z_N)^\N \to \T^d$ be a homomorphism with a finite image. Let $S: (\Z_N)^\N \to (\Z_N)^\N$ be the backward shift. Define a homomorphism $A$ on $G$ by
\[
A( X, Y) = (T_B(X)+i(Y), S(Y)) \qquad\text{where } X\in\T^d, Y \in(\Z_N)^{\N}.
\]
We claim that $A$ satisfies our standing assumptions. It is easy to show that $A$ is an epimorphism from the fact that $T_B$ and $S$ are both epimorphisms. Moreover, $\ker A$ is finite and its cardinality
\[ |\ker A| =  N |\ker T_B| = N |\det B|.
\]
We only need to prove that $\bigcup\limits_{j\in\N_0}\ker A^j$ is dense in $G$.
A simple calculation yields 
\begin{equation}\label{ex5}
\begin{aligned}
\ker A^j=\bigg\{(X,Y) \in G : y_{j+1}= y_{j+2} & =\dots=0 
\\
&\text{ and } T_B^j(X)=-\sum\limits_{k=0}^{j-1}T_B^k(i(S^{j-k-1}(Y))) \bigg\}.
\end{aligned}
\end{equation}
Take any $(X_0,Y_0)\in G$ such that $Y_0$ has finitely many non-zero coordinates. Hence, $S^j(Y_0)=0$ for sufficiently large $j>j_0$. It suffices to find a sequence $(X_j)_{j\in\N}$ in $\T^d$ such that 
\begin{equation}\label{ex7}
(X_j,Y_0) \in \ker A^j\quad\text{for } j>j_0\qquad\text{and}\qquad \lim_{j\to\infty} X_j = X_0.
\end{equation}
By Theorem~\ref{tdc1}, $\bigcup\limits_{j\in\N_0}\ker (T_B)^j$ is dense in $\T^d$. Therefore, for  every sequence  $(X'_j)_{j=1}^{\infty}$ in $\T^d$, there exists a sequence $(X_j)_{j=1}^{\infty}$ in $\T^d$ such that $X_j\in T_B^{-j}(X'_j)$ and $X_j$ converges to $X_0$ as $j\to \infty$. Taking $X'_j=-\sum\limits_{k=0}^{j-1}T_B^k(i(S^{j-k-1}(Y_0)))$,   this observation and \eqref{ex5} yields \eqref{ex7}.
\end{example}

Despite our efforts, the following problem remains open.

\begin{problem}
	Let $G=\T^{\N}$ be the infinite dimensional torus. Does there exist an epimorphism $A$ on $\T^{\N}$ such that the standing hypotheses on $A$ hold? That is, $\ker A$ is finite and $\bigcup\limits_{j\in\N_0}\ker A^j$ is dense in $\T^{\N}$. 
\end{problem}

\section{MRA and scaling sequences} 

In this section we give the definition of a multiresolution analysis (MRA) in the setting of a compact abelian group $G$ and an epimorphism $A$ satisfying the standing assumptions. Then we give the characterization of scaling functions. Our definition of an MRA is motivated by the definition of a periodic multiresolution analysis due to 
Skopina \cite{Sk} and Maksimenko and Skopina \cite{MS} in higher dimensions; see also \cite{NPS}. However, our definition differs slightly from \cite[Definition 9.1.1]{NPS} since it explicitly mentions a scaling function.

\begin{definition}\label{MRA}
We define the shift operator $T_y$, $y\in G$, acting on functions $f$ on $G$ by  
\[
T_{y}f(x)=f(x-y).
\]
A multiresolution analysis (MRA) of $L^p(G)$ for $1\le p<\infty$ is a sequence $(V_j)_{j\in\N_0}$ of closed subspaces of $L^p(G)$ satisfying the following properties:
\begin{enumerate}
\item[MR1.] $V_j\subset V_{j+1}$ for all $j\in\N_0$,
\item[MR2.] $\overline{\bigcup_{j=0}^{\infty}V_j}=L^p(G)$,
\item[MR3.] $f\in V_j$ if and only if $T_{\gamma}f\in V_j$, for $\gamma\in \ker A^{j}$ and $j\in\N_0$,
\item[MR4.] there exists a function $\varphi_j\in V_j$ such that $(T_a \varphi_j)_{a \in \ker A^j}$ forms a basis of $V_j$, $j\in\N_0$,
\item[MR5.] a) $f\in V_j \Rightarrow f(A(\cdot))\in V_{j+1}$;\\
                   b) $f\in V_{j+1} \Rightarrow Pf\in V_j$, where $P$ is as in Definition \ref{dep}.
\end{enumerate}
A sequence of functions $(\varphi_j)_{j\in\N_0}$ as in MR4 is called a \emph{scaling sequence} of an MRA $(V_j)_{j\in\N_0}$.
\end{definition}

Let $\widehat{A}$ be the adjoint homomorphism to $A$, which is defined by $\widehat{A}(\chi)=\chi\circ A$ for $\chi\in\widehat{G}$. Then, $\widehat{A}$ is a topological isomorphism of $\widehat{G}$ onto the annihilator of $\ker A$, see \cite[Proposition 6.5]{BR}.

\begin{definition}\label{digit}
Any set containing only one representative of each coset, $\widehat{G}/(\ker A)^{\perp} = \widehat G/\widehat A(\widehat G)$, is called a \emph{set of digits} of $A$, which is denoted by $D(A)$. 
Let $m=|\ker A|$ be the cardinality of $D(A)$. 
Then, we define recursively the set $D(A^j)$, $j\in\N$, of representatives of distinct cosets of $\widehat{G}/(\ker A^j)^\perp$ by
\begin{equation}\label{digit1}
D(A^{j+1}) = \{ \widehat A^j \pi + r: r \in D(A^j), \pi \in D(A)\}.
\end{equation}
\end{definition}

To prove that $D(A^{j+1})$ is a set of representatives of distinct cosets of $\widehat{G}/(\widehat A)^{j+1}(\widehat G)$, take any
$\pi, \pi' \in D(A)$ and $r,r' \in D(A^j)$ such that 
\[
\widehat A^j \pi + r - (\widehat A^j \pi' + r') \in (\widehat A)^{j+1}(\widehat G).
\]
We can deduce that $r=r'$ and then $\pi =\pi'$. Hence, elements of $D(A^{j+1})$ represent distinct cosets of $\widehat{G}/(\widehat A)^{j+1}(\widehat G)$. Moreover, its cardinality $|D(A^{j+1})| = |D(A)| |D(A^j)| = m^{j+1}$. Therefore, \eqref{digit1} defines representatives of all such cosets.

The main result of this section is a characterization of scaling functions associated to an MRA $(V_j)_{j\in\N_0}$, which is a generalization of a result of Maksimenko and Skopina \cite[Theorem 7]{MS} to compact abelian groups $G$, see also \cite[Theorem 9.1.4]{NPS}.

\begin{theorem}\label{thm:scaling}
Functions $(\varphi_j)_{j\in \N_0} \subset L^p(G)$ form a scaling sequence for an MRA of $L^p(G)$, $1\le p<\infty$, if and only if:
\begin{enumerate}
\item [(1)] $\widehat{\varphi}_0(\chi)=0$ for all $\chi\neq {\bf 0}$, $\chi\in \widehat{G}$.
\item [(2)] For any $j\in\N_0$ and any $\eta\in \hat{G}$, there exists $\chi\in (\ker A^j)^{\perp}+\eta$ such that $\widehat{\varphi}_j(\chi)\neq 0$.
\item[(3)] For any $\chi\in \widehat{G}$, there exists $j\in\N_0$ such that $\widehat{\varphi}_j(\chi)\neq 0$.
\item[(4)] For any $j\in\N$ and any $\eta\in \widehat{G}$, there exists a number $\mu^j_{\eta}$ such that $\widehat{\varphi}_{j-1}(\chi)=\mu^j_{\eta}\widehat{\varphi}_j(\chi)$ for all $\chi\in (\ker A^j)^{\perp}+\eta$.
\item[(5)] For any $j\in\N_0$ and any $\eta\in \widehat{G}$, there exists a number $\gamma^j_{\eta}\neq 0$ such that $\widehat{\varphi}_{j+1}(\widehat{A}(\chi))=\gamma^j_{\eta}\widehat{\varphi}_{j}(\chi)$ for all $\chi\in (\ker A^j)^{\perp}+\eta$.
\end{enumerate}
\end{theorem}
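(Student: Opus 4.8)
The plan is to pass everything to the Fourier side and read off each numbered condition from one of the MRA axioms, using throughout the elementary transforms $\widehat{T_a f}(\chi)=\overline{\chi(a)}\widehat f(\chi)$, the dilation formula $\widehat{f\circ A}(\widehat A\xi)=\widehat f(\xi)$ with $\widehat{f\circ A}\equiv 0$ off $(\ker A)^{\perp}=\widehat A(\widehat G)$, and the periodization formula $\widehat{Pf}(\xi)=|\ker A|\,\widehat f(\widehat A\xi)$ (all consequences of $\Delta(A)=1$ and of the orthogonality relation $\sum_{a\in\ker A}\chi(a)=|\ker A|\,\mathbf 1_{(\ker A)^{\perp}}(\chi)$). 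Since $|\ker A^j|=m^j$ with $m=|\ker A|$, each $V_j$ is finite dimensional, and the workhorse is the dictionary: $f\in V_j$ if and only if $\widehat f=h\,\widehat\varphi_j$ for some function $h$ on $\widehat G$ that is constant on each coset of $(\ker A^j)^{\perp}$ (equivalently $h(\chi)=\sum_{a\in\ker A^j}c_a\overline{\chi(a)}$); this follows by expanding $f=\sum_{a}c_aT_a\varphi_j$ and is a bijection because every $(\ker A^j)^{\perp}$-periodic $h$ arises from unique coefficients $c_a$ (finite Fourier inversion on the group $\ker A^j$).

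For the forward implication I would argue as follows. Condition (2) is exactly the statement that $(T_a\varphi_j)_{a\in\ker A^j}$ is linearly independent: writing $\sum_a c_a T_a\varphi_j=0$ on the Fourier side and evaluating on each coset, the character table $(\overline{\eta(a)})$ of the finite group $\ker A^j$ is invertible, so independence holds precisely when $\widehat\varphi_j$ fails to vanish identically on any coset; this is MR4. Condition (4) comes from $V_{j-1}\subset V_j$ (MR1): $\varphi_{j-1}\in V_j$ gives $\widehat\varphi_{j-1}=h\,\widehat\varphi_j$ with $h$ $(\ker A^j)^{\perp}$-periodic, i.e.\ constant $\mu^j_\eta$ on each coset. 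Condition (5) comes from $P\varphi_{j+1}\in V_j$ (MR5b): the periodization formula gives $|\ker A|\,\widehat\varphi_{j+1}(\widehat A\xi)=h(\xi)\widehat\varphi_j(\xi)$ with $h$ periodic, so $\gamma^j_\eta:=h(\eta)/|\ker A|$ works, and $\gamma^j_\eta\neq0$ because $\widehat\varphi_{j+1}$ is nonvanishing on the coset $\widehat A\eta+(\ker A^{j+1})^{\perp}$ by condition (2). Condition (3) follows from density (MR2): if $\widehat\varphi_j(\chi_0)=0$ for all $j$ then the bounded functional $f\mapsto\widehat f(\chi_0)$ (here $\overline{\chi_0}\in L^{p'}$ since $G$ is compact) kills every $f\in\bigcup_jV_j$, hence all of $L^p(G)=\overline{\bigcup_jV_j}$, contradicting $\widehat{\chi_0}(\chi_0)=1$. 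Finally, condition (1) is the one that is not a direct transcription: I would introduce the operators $R_j:=|\ker A|^{-j}P^j$, which satisfy $\|R_j\|_{L^p\to L^p}\le1$ (from $\|Pf\|_p\le|\ker A|\,\|f\|_p$ by Minkowski and $\Delta(A)=1$), carry $V_j$ into $V_0$ (iterating MR5b), and fix the constant function $\mathbf 1$ because $\widehat{R_jf}(\xi)=\widehat f(\widehat A^{\,j}\xi)$. Approximating $\mathbf 1\in L^p=\overline{\bigcup_jV_j}$ by $f_n\in V_{k_n}$ and applying $R_{k_n}$ yields $R_{k_n}f_n\to\mathbf 1$ with $R_{k_n}f_n\in V_0$; as $V_0$ is closed, $\mathbf 1\in V_0$, so $\varphi_0$ is a nonzero multiple of $\mathbf 1$ and $\widehat\varphi_0$ is supported at $\mathbf 0$.

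For the converse I would set $V_j:=\operatorname{span}\{T_a\varphi_j:a\in\ker A^j\}$ and verify the axioms using the dictionary. MR4 is condition (2) read backwards through the same character-table computation; MR3 is immediate since translation by $\gamma\in\ker A^j$ permutes the generators. MR1 follows from (4) (which exhibits $\widehat\varphi_j=h\,\widehat\varphi_{j+1}$ with $h$ periodic, so $\varphi_j\in V_{j+1}$, together with $\ker A^j\subset\ker A^{j+1}$). For MR5b one inserts (5) into the periodization formula, and for MR5a one inserts (5) into the dilation formula; in both cases the point is that the resulting multiplier is $(\ker A^{j})^{\perp}$- respectively $(\ker A^{j+1})^{\perp}$-periodic, which holds because $\widehat A$ induces a bijection between cosets of $(\ker A^j)^{\perp}$ and cosets of $(\ker A^{j+1})^{\perp}$ and the numbers $\gamma^j_\eta$ are constant on cosets.

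The remaining and genuinely delicate axiom is the density MR2, and this is where I expect the main difficulty. The idea is to avoid Parseval/bracket arguments (which are awkward for general $p$) by showing first that $W:=\overline{\bigcup_jV_j}$ is invariant under all translations. Indeed, for $\gamma\in\ker A^j$ and every $k\ge j$ one has $\gamma\in\ker A^k$, so $T_\gamma V_k=V_k$ by MR3; since the $V_k$ are nested, $T_\gamma W=W$ for every $\gamma$ in the dense set $\bigcup_j\ker A^j$ (here the standing assumption is used), and then continuity of translation on $L^p(G)$ for $p<\infty$ upgrades this to $T_yW=W$ for all $y\in G$. A closed translation-invariant subspace of $L^p(G)$ is convolution-invariant, hence contains $f\ast\chi=\widehat f(\chi)\chi$ for each $f\in W$ and each character $\chi$; consequently $W=\overline{\operatorname{span}}\{\chi:\chi\in W\}$ and $\chi\in W$ if and only if $\widehat f(\chi)\neq0$ for some $f\in\bigcup_jV_j$, i.e.\ (by the dictionary) if and only if $\widehat\varphi_j(\chi)\neq0$ for some $j$. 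Condition (3) says this holds for every $\chi\in\widehat G$, so $W$ contains every character; as finite linear combinations of characters are dense in $L^p(G)$ for $p<\infty$, this gives $W=L^p(G)$. I expect the spectral description of closed translation-invariant subspaces of $L^p(G)$, together with the uniform-in-$p$ boundedness of $P$ used in the proof of (1), to be the parts requiring the most care.
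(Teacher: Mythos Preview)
Your argument is correct and takes a genuinely different route from the paper in two places. For condition (1) the paper proves a separate lemma (Lemma~\ref{lem:V0}) by contradiction: assuming $\widehat f(\chi_0)\neq0$ for some $\chi_0\neq\mathbf 0$ it iterates $\widehat A$ and uses the standing assumption to produce infinitely many nonzero Fourier coefficients of equal modulus, contradicting Riemann--Lebesgue; your averaging argument with $R_j=|\ker A|^{-j}P^j$ is cleaner and avoids this detour. For the converse direction the paper builds an auxiliary basis $(v^j_\eta)$ of each $V_j$ satisfying compatibility properties ({\bf V0})--({\bf V3}) (Lemma~\ref{lem:basis}) and then verifies MR5 and MR2 through these functions, the density argument for MR2 being an explicit approximation of each character by $v^j_\chi/\widehat v^j_\chi(\chi)$ via averaging operators $S_j$; your soft argument---first upgrading shift-invariance under $\bigcup_j\ker A^j$ to full translation invariance by density and continuity of translation on $L^p$, then convolving with characters---bypasses all of this machinery. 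What the paper's approach buys is that the basis $(v^j_\eta)$ and the decomposition $\omega^j_\eta\varphi_j=\alpha^j_\eta v^j_\eta$ are reused in Section~4 for the wavelet construction, so the investment pays off later; your approach is more economical if one only wants Theorem~\ref{thm:scaling}. One minor remark: in your MR2 argument you do not actually need the spectral-synthesis statement $W=\overline{\operatorname{span}}\{\chi:\chi\in W\}$ (which for $p\neq2$ is a nontrivial fact); it suffices, as you in effect do, to show directly that every character lies in $W$ and then invoke density of trigonometric polynomials.
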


The proof of Theorem \ref{thm:scaling} follows a similar scheme as in \cite{MS} with necessary changes imposed by the more general setting of this theorem. The following lemmas are useful in proving the main results.

\begin{lemma}\label{lem:V0}
Suppose $V_j\subset L^p(G)$, $1\le p<\infty$, $j\in\N_0$ and axioms MR1, MR2, MR3 and MR5 b) of Definition~\ref{MRA} hold. Then the space $V_0$ consists of constants.
\end{lemma}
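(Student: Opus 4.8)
The plan is to show directly that every $f\in V_0$ is a.e.\ equal to a constant. Since $A^0$ is the identity we have $\ker A^0=\{0\}$, so the basis $(T_a\varphi_0)_{a\in\ker A^0}$ provided by the scaling axiom MR4 reduces to the single function $\varphi_0$; thus $V_0=\C\varphi_0$ is at most one--dimensional, and it suffices to show that its generator $\varphi_0$ is constant, equivalently that $\widehat{\varphi_0}(\chi)=0$ for every $\chi\neq{\bf 0}$. This one--dimensionality is what the argument really uses: MR3 plays no role, while one--dimensionality cannot be dispensed with, since $V_j\equiv L^p(G)$ satisfies MR1, MR2, MR3 and MR5\,b) yet its $V_0$ is not made of constants.

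First I would record the Fourier side of the periodization operator. From $Pf(Ax)=\sum_{a\in\ker A}f(x+a)$, the change of variables $\int_G(g\circ A)\,dx=\int_G g\,dx$ (valid since $\Delta(A)=1$), and $(\widehat A\chi)(a)=\chi(Aa)=1$ for $a\in\ker A$, a one--line computation gives
\[
\widehat{Pf}(\chi)=m\,\widehat f(\widehat A\chi),\qquad m=|\ker A| .
\]
By MR1 we have $\varphi_0\in V_0\subset V_1$, so MR5\,b) gives $P\varphi_0\in V_0=\C\varphi_0$, i.e.\ $P\varphi_0=\lambda\varphi_0$ for some $\lambda\in\C$.

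The decisive step is to show $\lambda=m$, which I expect to be the main obstacle, since it is the only place the density axiom MR2 enters. Integrating $P\varphi_0=\lambda\varphi_0$ and using Proposition~\ref{lem:integral} (with $\Delta(A)=1$) yields $\lambda\int_G\varphi_0=\int_G P\varphi_0=m\int_G\varphi_0$, so $\lambda=m$ unless $\int_G\varphi_0=0$. To exclude the latter I would use density: the mean $f\mapsto\int_G f$ is a nonzero bounded functional on $L^p(G)$, hence by MR2 it cannot vanish on all of $\bigcup_j V_j$; choose $g\in V_{j_0}$ with $\int_G g\neq0$. Iterating MR5\,b) gives $P^{j_0}g\in V_0$, while Proposition~\ref{lem:integral} gives $\int_G P^{j_0}g=m^{j_0}\int_G g\neq0$. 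Thus $V_0$ contains a function of nonzero mean, forcing $\int_G\varphi_0\neq0$ and therefore $\lambda=m$; here density is transported down to level $0$ by the periodization $P^{j_0}\colon V_{j_0}\to V_0$.

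With $\lambda=m$ the Fourier identity iterates to $\widehat{\varphi_0}(\widehat A^{\,n}\chi)=\widehat{\varphi_0}(\chi)$ for all $n\in\N_0$. Fix $\chi\neq{\bf 0}$. The orbit $(\widehat A^{\,n}\chi)_n$ consists of distinct points: an equality $\widehat A^{\,n_1}\chi=\widehat A^{\,n_2}\chi$ with $n_1<n_2$ and $d=n_2-n_1$ would, by injectivity of $\widehat A$, give $\chi=\widehat A^{\,kd}\chi$, which lies in the range $(\ker A^{kd})^\perp$ of $\widehat A^{\,kd}$ for every $k$, whence $\chi\in\bigcap_l(\ker A^l)^\perp=\{{\bf 0}\}$ by Proposition~\ref{prop:ker}, a contradiction. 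Being infinitely many distinct points of the discrete group $\widehat G$, they eventually leave every finite set, so $\widehat{\varphi_0}(\widehat A^{\,n}\chi)\to0$ by the Riemann--Lebesgue lemma (applicable since $\varphi_0\in L^1(G)$ because $G$ is compact and $p\ge1$). As this sequence is constantly equal to $\widehat{\varphi_0}(\chi)$, we conclude $\widehat{\varphi_0}(\chi)=0$. Hence $\varphi_0$, and with it every element of $V_0$, is constant.
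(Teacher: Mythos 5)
Your proof is correct and follows essentially the same route as the paper's: one-dimensionality of $V_0$, the periodization eigenvalue relation $P\varphi_0=|\ker A|\,\varphi_0$ forced by MR2 through iterated periodization of a nonzero-mean function, the resulting invariance $\widehat{\varphi}_0(\widehat{A}\chi)=\widehat{\varphi}_0(\chi)$ (the paper phrases this via $D_A$ and \cite[Lemma~6.6]{BR} rather than your equivalent formula $\widehat{Pf}(\chi)=|\ker A|\widehat{f}(\widehat{A}\chi)$), distinctness of the orbit $(\widehat{A}^n\chi)_n$ via Proposition~\ref{prop:ker}, and the Riemann--Lebesgue lemma. Your side remark is also well taken: the one-dimensionality of $V_0$ really rests on MR4, which is absent from the lemma's stated hypotheses, whereas the paper's proof attributes it to MR3 --- vacuous at $j=0$ since $\ker A^0=\{0\}$ --- and your example $V_j\equiv L^p(G)$ shows the listed axioms alone do not suffice.
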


\proof The space $V_0$ is one-dimensional by property MR3. Let $f\in V_0$ such that $\|f\|\neq 0$. First we will show that $\hat{f}({\bf 0})\neq 0$. Consider $g=Pf$. By Proposition \ref{lem:integral}
\begin{eqnarray*}
\widehat{g}({\bf 0}) = \widehat{Pf}({\bf 0})
= \int_GPf(x)dx
= |\ker A| \int_Gf(x)dx
=  |\ker A| \widehat{f}({\bf 0}).
\end{eqnarray*}
Let $g_0\in V_j$. Then by MR5 b), $g_1:=Pg_0\in V_{j-1}, \ldots, g_j:=Pg_{j-1}\in V_0$. If we assume $\widehat{f}({\bf 0})=0$, then $\widehat{g_j}({\bf 0})=0$. This implies that any function from any $V_j$ has zero mean, which contradicts axiom MR2 of Definition~\ref{MRA}.

Next suppose that $\widehat{f}(\chi_0)\neq 0$ for some $\chi_0\not = {\bf 0}$. Since $ f\in V_0$, using MR1, we have $f\in V_1$, hence by MR5 b), $g\in V_0$. Since $V_0$ is a one-dimensional space, therefore, for some constant $\lambda$ it follows that $g=\lambda f$ and hence $\widehat{g}(\chi)=\lambda\widehat{f}(\chi)$. From the above calculation $\lambda=|\ker A|$.

We define the $A$-dilation operator on $L^p(G)$ for $1\le p<\infty$ by
\[
D_Af(x)=f(Ax) ~{\rm for}~ {\rm all}~ x\in G.
\]
By \cite[Lemma~6.6]{BR}, we have
\begin{equation*}
\widehat{D_Af}(\chi)=
\left\{
\begin{array}{lll}
\widehat{f}(\widehat{A}^{-1}(\chi)) & {\rm for}~\chi\in \widehat{A}(\widehat{G})=(\ker A)^{\perp},\\
0 & {\rm otherwise}.
\end{array}
\right.
\end{equation*}
Hence, for $\chi\in(\ker A)^{\perp}$, 
\begin{eqnarray*}
\widehat{D_Ag}(\chi) & = & \int_GPf(Ax)\overline{\chi(x)}dx
 =  \sum\limits_{a\in \ker A}\int_Gf(x+a)\overline{\chi(x)}dx\\
&= & \sum\limits_{a\in \ker A}\chi(a)\widehat{f}(\chi)
 =  |\ker A|\widehat{f}(\chi).
\end{eqnarray*}
Therefore, for $\chi\in(\ker A)^{\perp}$, 
\[
\widehat{f}(\chi) = \widehat{f}(\widehat{A}^{-1}(\chi)).
\]
Equivalently, for any $\eta=\widehat{A}^{-1}(\chi)\in\widehat{G}$, we have $\widehat{f}(\widehat{A}\eta)=\widehat{f}(\eta)$. Hence, for any $m\in \N$ we have
\begin{equation}\label{equ}
0\neq \widehat{f}(\chi_0)=\widehat{f}(\widehat{A}\chi_0 )=\cdots =\widehat{f}(\widehat{A}^m\chi_0).
\end{equation}
We claim that $\chi_0, \widehat A\chi_0, \widehat A^2\chi_0, \ldots$ are all distinct. On the contrary, suppose that for some $m\ge 1$ we have
$\chi_0 = \widehat{A}^m\chi_0$. Since $\chi_0(x)=\chi_0(A^mx)$ for all $x\in G$, we necessarily have $\chi_0(x)=1$ for all $x\in \ker A^{km}$, $k\in \N$. By our standing assumptions, Proposition \ref{prop:ker} implies that $\chi_0(x)=1$ for all $x\in G$, which contradicts the assumption that $\chi_0 \ne \bf 0$. 

Combining the above claim with \eqref{equ} leads to the contradiction
with the fact that the Fourier transform maps $L^1(G) \supset L^p(G)$ into $C_0(\widehat{G})$. Consequently, $\widehat{f}(\chi)=0$ for all $\chi \ne \bf 0$, and hence, $f$ is constant.
\qed

\begin{definition}\label{ome}
Define the operators $\omega_{\eta}^j$ on $L^1(G)$, for $j\in\N_0$ and $\eta\in\widehat{G}$, as follows

\begin{eqnarray*}
\omega^0_{\eta}f & := & f,\\
\omega^j_{\eta}f(x) & := & \frac{1}{|\ker A^j|}\sum\limits_{a\in \ker A^j}\overline{\eta(a)}f(x+a).
\end{eqnarray*}
\end{definition}
Note that unlike \cite{NPS}, the operators $\omega_{\eta}^j$ are not defined recursively. 

\begin{lemma}\label{lem:omega}
Let $f\in L^1(G)$, $j\in \N_0$, and $\eta\in\widehat{G}$. 
Then, $\omega^j_\eta$ has a Fourier series representation
\begin{equation}\label{omega}
\omega^j_{\eta}f \sim \sum\limits_{\kappa\in(\ker A^j)^{\perp}}\widehat{f}(\eta+\kappa)(\eta+\kappa).
\end{equation}
That is, for any $\chi \in \widehat{G}$,
\begin{equation}\label{omega3}
\widehat{\omega_{\eta}^jf}(\chi)=\begin{cases}
\widehat{f}(\chi) &\text{if }\chi\in(\ker A^j)^{\perp}+\eta, \\
0 & \text{if }\chi\not\in(\ker A^j)^{\perp}+\eta.
\end{cases}
\end{equation}
In addition, let $V_j\subset L^p(G)$ for $j\in\N_0$ be such that MR3 of Definition~\ref{MRA} holds. If $f\in V_{j_0}$ for fixed $j_0$, then $\omega_{\eta}^jf\in V_{j_0}$ for all $j=0, \ldots, j_0$.
\end{lemma}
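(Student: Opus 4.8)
The plan is to verify Lemma~\ref{lem:omega} in three stages: first compute the Fourier coefficients of $\omega^j_\eta f$ to establish the selection formula \eqref{omega3} (which immediately yields the Fourier series \eqref{omega}), and then use \eqref{omega3} together with axiom MR3 to show that $\omega^j_\eta$ preserves the spaces $V_{j_0}$ for $j \le j_0$.

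For the Fourier computation, I would fix $\chi \in \widehat G$ and compute $\widehat{\omega^j_\eta f}(\chi)$ directly from the definition. Using linearity of the Fourier transform and the fact that the shift $T_{-a}$ (i.e.\ $x \mapsto f(x+a)$) transforms under $\widehat{\cdot}$ by the character factor, I get
\[
\widehat{\omega^j_\eta f}(\chi) = \frac{1}{|\ker A^j|}\sum_{a\in\ker A^j}\overline{\eta(a)}\,\chi(a)\,\widehat f(\chi) = \left(\frac{1}{|\ker A^j|}\sum_{a\in\ker A^j}(\chi-\eta)(a)\right)\widehat f(\chi).
\]
The inner sum is a sum of a character $\chi - \eta$ over the finite group $\ker A^j$, so by the standard orthogonality relation it equals $1$ when $\chi - \eta$ is trivial on $\ker A^j$, i.e.\ $\chi - \eta \in (\ker A^j)^\perp$, and equals $0$ otherwise. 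This is exactly the dichotomy in \eqref{omega3}, and summing over $\chi \in (\ker A^j)^\perp + \eta$ gives the series \eqref{omega}. The $j=0$ case is trivial since $\ker A^0 = \{\mathbf 0\}$ and $(\ker A^0)^\perp = \widehat G$, so $\omega^0_\eta f = f$ is consistent.

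For the invariance claim, suppose $f \in V_{j_0}$ and fix $j \le j_0$. I would write $\omega^j_\eta f$ as an average of shifts $T_{-a} f$ over $a \in \ker A^j$, noting that $\ker A^j \subset \ker A^{j_0}$ since $A^j x = 0$ implies $A^{j_0} x = A^{j_0 - j}(A^j x) = 0$. By axiom MR3 applied at level $j_0$, each $T_{-a}f = T_{(-a)}f$ lies in $V_{j_0}$ for $a \in \ker A^{j_0} \supset \ker A^j$, so the finite linear combination $\omega^j_\eta f$ lies in $V_{j_0}$ as well, using that $V_{j_0}$ is a closed (in particular linear) subspace.

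The bookkeeping is routine; the only point requiring care is the interchange of summation and integration in the Fourier computation, which is justified because $\ker A^j$ is finite (so the sum is finite and passes freely through the integral). I do not anticipate a genuine obstacle here: the essential content is the character orthogonality over the finite subgroup $\ker A^j$ and the inclusion $\ker A^j \subset \ker A^{j_0}$, both of which are elementary once the finiteness of the kernels (guaranteed by the standing assumptions) is invoked.
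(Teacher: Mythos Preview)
Your proposal is correct and follows essentially the same approach as the paper: a direct computation of $\widehat{\omega^j_\eta f}(\chi)$ using the shift formula for the Fourier transform and character orthogonality over the finite group $\ker A^j$ (the paper cites \cite[Lemma~23.19]{R} for this), followed by the observation that $\ker A^j\subset\ker A^{j_0}$ together with MR3 gives $\omega^j_\eta f\in V_{j_0}$ as a finite linear combination of shifts.
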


\proof 
We start by the following calculation.
\begin{eqnarray*}
\widehat{\omega_{\eta}^jf}(\chi) & = & \frac{1}{|\ker A^j|}\sum\limits_{a\in \ker A^j}\overline{\eta(a)}\int_Gf(x+a)\overline{\chi(x)}dx\\
& = & \frac{1}{|\ker A^j|}\sum\limits_{a\in \ker A^j}\overline{\eta(a)}\chi(a)\int_Gf(x)\overline{\chi(x)}dx.
\end{eqnarray*}
The product of two character is also a character on $\widehat{G}$. Therefore using \cite[Lemma~23.19]{R}, the sum on right hand side is $|\ker A^j|$ if $\chi -\eta\in(\ker A^j)^{\perp}$ and $0$ if $\chi - \eta \not\in(\ker A^j)^{\perp}$. This proves \eqref{omega3}.

Next, suppose that $f\in V_{j_0}$ and $j=0, \ldots, j_0$. Then by MR3 of Definition~\ref{MRA}, $T_af\in V_{j_0}$ for $a\in \ker A^j \subset \ker A^{j_0}$. Therefore, $\omega_{\eta}^jf\in V_{j_0}$.
\qed

\begin{lemma}\label{indep} Let $f\in L^1(G)$ and $j\in \N_0$. Then functions $T_a f$, $a \in \ker A^j$, are linearly independent if and only if $\omega^j_\eta f \ne 0$ for all $\eta \in D(A^j)$.
\end{lemma}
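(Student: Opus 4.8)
The plan is to pass everything to the Fourier side, where both conditions turn into statements about the vanishing of $\widehat f$ on cosets of $(\ker A^j)^{\perp}$, and then to settle them using orthogonality of characters on the finite group $\ker A^j$ exactly as in \cite[Lemma~23.19]{R}. Write $N=|\ker A^j|$. For scalars $c=(c_a)_{a\in\ker A^j}$, the shift formula $\widehat{T_a f}(\chi)=\overline{\chi(a)}\,\widehat f(\chi)$ shows that the Fourier transform of $\sum_{a\in\ker A^j} c_a T_a f$ at $\chi$ equals $\widehat f(\chi)\,P_c(\chi)$, where
\[
P_c(\chi) := \sum_{a\in\ker A^j} c_a\,\overline{\chi(a)}.
\]
The first thing I would record is that $P_c$ is constant on cosets of $(\ker A^j)^{\perp}$: if $\kappa\in(\ker A^j)^{\perp}$ then $\kappa(a)=1$ for every $a\in\ker A^j$, so $P_c(\chi+\kappa)=P_c(\chi)$. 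Thus $P_c$ descends to a function on $\widehat G/(\ker A^j)^{\perp}$, which restriction identifies with the dual $\widehat{\ker A^j}$ of the finite group $\ker A^j$. Since the Fourier transform is injective on $L^1(G)$, the relation $\sum_a c_a T_a f=0$ is equivalent to $\widehat f(\chi)\,P_c(\chi)=0$ for all $\chi$, i.e. to $P_c$ vanishing on every coset of $(\ker A^j)^{\perp}$ on which $\widehat f$ is not identically zero. By \eqref{omega3} in Lemma~\ref{lem:omega}, those cosets are exactly the ones represented by the $\eta\in D(A^j)$ for which $\omega^j_\eta f\neq 0$.

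For the implication that linear independence forces $\omega^j_\eta f\neq 0$ for all $\eta\in D(A^j)$, I would argue by contraposition. Suppose $\omega^j_{\eta_0}f=0$ for some $\eta_0\in D(A^j)$; by \eqref{omega3} this means $\widehat f$ vanishes identically on the coset $\eta_0+(\ker A^j)^{\perp}$. Choosing the coefficients $c_a:=\eta_0(a)$, character orthogonality on $\ker A^j$ (\cite[Lemma~23.19]{R}) gives $P_c(\chi)=\sum_{a}\eta_0(a)\overline{\chi(a)}=N$ when $\chi\in\eta_0+(\ker A^j)^{\perp}$ and $P_c(\chi)=0$ otherwise. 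Hence $\widehat f(\chi)P_c(\chi)=0$ for every $\chi$, so $\sum_a\eta_0(a)T_a f=0$, a nontrivial relation since the coefficients $c_a=\eta_0(a)$ all have modulus $1$ and so are not all zero. Therefore the $T_a f$ are linearly dependent, which is the contrapositive.

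For the converse, assume $\omega^j_\eta f\neq 0$ for every $\eta\in D(A^j)$, so every coset of $(\ker A^j)^{\perp}$ contains a character at which $\widehat f$ is nonzero, and suppose $\sum_a c_a T_a f=0$. By the reduction above $P_c$ must vanish on each coset, and since it is coset-constant this reads $\sum_a c_a\,\overline{\eta(a)}=0$ for all $\eta\in D(A^j)$. As $\eta$ ranges over $D(A^j)$ its restrictions $\eta|_{\ker A^j}$ range over all of $\widehat{\ker A^j}$, so this says the finite Fourier transform of $c$ on $\ker A^j$ vanishes identically; inversion on the finite abelian group $\ker A^j$ then forces $c\equiv 0$, giving linear independence. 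The only point that really needs care — and the part I would write out most carefully — is the identification of $\widehat G/(\ker A^j)^{\perp}$ with $\widehat{\ker A^j}$ and the consequent invertibility of the finite Fourier transform $c\mapsto\bigl(P_c(\eta)\bigr)_{\eta\in D(A^j)}$; beyond that, the proof is a direct application of \eqref{omega3} and character orthogonality.
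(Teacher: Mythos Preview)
Your proof is correct and rests on the same two ingredients as the paper's: the formula \eqref{omega3} identifying $\omega^j_\eta f\neq 0$ with nonvanishing of $\widehat f$ on the coset $\eta+(\ker A^j)^\perp$, and the invertibility of the finite Fourier transform on $\ker A^j$. The paper packages this as a change of basis---the DFT matrix $(\eta(a))_{\eta,a}$ takes the family $(T_a f)$ to the family $(\omega^j_\eta f)$, so linear independence of one is equivalent to that of the other, and then disjointness of supports in \eqref{omega3} reduces linear independence of the $\omega^j_\eta f$ to each being nonzero---whereas you unwind the same matrix action directly on the Fourier side via $P_c$; your route is more explicit but otherwise the same argument.
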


\begin{proof}
Consider $m^j \times m^j$ matrix $(\eta(a))_{\eta \in D(A^j), a\in \ker A^j}$, which represents the discrete Fourier transform of the finite group $\ker A^j \subset G$. Its characters are elements of $\widehat G/(\ker A^j)^\perp$, which we identify with $D(A^j)$. The discrete Fourier transform matrix is a multiple of a unitary matrix, and hence invertible. Therefore, $T_a f$, $a \in \ker A^j$, are linearly independent if and only if $\omega^j_\eta f$, $\eta \in D(A^j)$ are linearly independent. By \eqref{omega3} the supports of $\widehat{\omega^j_\eta f}$, $\eta \in D(A^j)$, are disjoint. Hence, their linear independence is equivalent to $\widehat{\omega^j_\eta f} \ne 0$ for all $\eta \in D(A^j)$.
\end{proof}

\begin{lemma}\label{lem:basis}
Let $(V_j)_{j=0}^{\infty}$ be an MRA of $L^p(G)$, $1\le p<\infty$. Then there exists a family of functions $v^j_\eta$, $j\in \N_0$, $\eta \in \widehat{G}$, satisfying the following properties:
\begin{enumerate}
\item[{\bf V0.}]  $v^j_\eta=v^j_{\eta'}$ if $\eta -\eta' \in (\ker A^j)^\perp$ and $(v^j_{\eta})_{\eta\in D(A^j)}$ is a basis of $V_j$.
\item[{\bf V1.}] $\widehat{v}^j_{\eta}(\chi)=0$ for all $\chi\not\in(\ker A^j)^{\perp}+\eta$.
\item[{\bf V2.}] If \ $\widehat{v}^{j}_{\eta}(\chi_0)\neq 0$ for some $\chi_0\in(\ker A^{j+1})^{\perp}+\eta$, then $\widehat{v}^{j+1}_{\eta}(\chi)=\widehat{v}^{j}_{\eta}(\chi)$ for all $\chi\in(\ker A^{j+1})^{\perp}+\eta$.
\item[{\bf V3.}] $\widehat{v}^{j}_{\eta}(\chi)=\widehat{v}^{j+1}_{\widehat{A}\eta}(\widehat{A}\chi)$ for all $\chi\in\widehat{G}$.
\end{enumerate}
\end{lemma}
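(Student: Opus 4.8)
The plan is to realize each $V_j$ as an internal direct sum of one-dimensional ``coset subspaces'' and to choose compatible generators by induction on $j$. Fix any scaling function $\varphi_j\in V_j$ as in MR4. By Lemma~\ref{indep} the functions $\omega^j_\eta\varphi_j$, $\eta\in D(A^j)$, are nonzero, and by \eqref{omega3} they have pairwise disjoint Fourier supports, the support of $\omega^j_\eta\varphi_j$ lying in $(\ker A^j)^\perp+\eta$. Since they arise from the basis $(T_a\varphi_j)_{a\in\ker A^j}$ by the invertible discrete Fourier transform, they again form a basis of $V_j$. Consequently, for each coset the space $L^j_\eta:=\{f\in V_j:\widehat f \text{ is supported in }(\ker A^j)^\perp+\eta\}$ is exactly one-dimensional, and $V_j=\bigoplus_{\eta\in D(A^j)}L^j_\eta$. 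Properties V0 (basis) and V1 hold automatically for any choice of nonzero generators $v^j_\eta\in L^j_\eta$ that depends only on $\eta\bmod(\ker A^j)^\perp$; the real content is to fix the normalizations so that V2 and V3 also hold.

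I would construct the generators by induction on $j$. For $j=0$ we have $(\ker A^0)^\perp=\widehat G$ and, by Lemma~\ref{lem:V0}, $V_0$ consists of constants; set $v^0_\eta$ to be a fixed nonzero constant for every $\eta$. For the step $j\to j+1$ I split the indices $\eta'\in D(A^{j+1})$ according to whether $\eta'\in\widehat A(\widehat G)=(\ker A)^\perp$. If $\eta'=\widehat A\eta$ with $\eta\in D(A^j)$, I define $v^{j+1}_{\eta'}:=D_A v^j_\eta$; this lies in $V_{j+1}$ by MR5~a), is nonzero since $D_A$ is injective, and has Fourier support in $\widehat A\big((\ker A^j)^\perp+\eta\big)=(\ker A^{j+1})^\perp+\eta'$, so it generates $L^{j+1}_{\eta'}$. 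The dilation formula $\widehat{D_Af}(\widehat A\chi)=\widehat f(\chi)$ from the proof of Lemma~\ref{lem:V0} then gives V3 directly, and note that V3 refers only to these digit-$0$ functions. If instead $\eta'\notin(\ker A)^\perp$, then V3 does not constrain $v^{j+1}_{\eta'}$ at all, so I use the nesting $V_j\subset V_{j+1}$ (MR1): I set $v^{j+1}_{\eta'}:=\omega^{j+1}_{\eta'}v^j_{\eta'}$ whenever this is nonzero, which belongs to $V_{j+1}$ by the last part of Lemma~\ref{lem:omega}, and otherwise I let $v^{j+1}_{\eta'}$ be any nonzero generator of $L^{j+1}_{\eta'}$. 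By \eqref{omega3} the first choice forces $\widehat{v}^{j+1}_{\eta'}=\widehat{v}^{j}_{\eta'}$ on $(\ker A^{j+1})^\perp+\eta'$, which is exactly V2, while in the second case the hypothesis of V2 fails and V2 holds vacuously. Extending $v^{j+1}_\eta$ to all $\eta$ by the coset rule, the basis property, V0, V1, and V3 are then immediate, and V2 holds for every digit $\neq 0$.

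The main obstacle is V2 for the remaining ``digit $0$'' indices $\eta'=\widehat A\eta$, since these were pinned down by V3 (dilation) rather than by the projection engineered to produce V2. Here I would exploit that $\eta'\in(\ker A)^\perp$ is a digit-$0$ index at every level, so the inductive construction already gives $v^j_{\eta'}=D_A v^{j-1}_\eta$ (the preimage $\eta$ being determined modulo $(\ker A^{j-1})^\perp$, which is all V0 needs). Applying $\widehat{D_Af}(\widehat A\chi)=\widehat f(\chi)$ on both levels, the V2 assertion for the pair $(j,j+1)$ at $\eta'$, read on $(\ker A^{j+1})^\perp+\eta'\subset(\ker A)^\perp$, transports under $\widehat A^{-1}$ to the V2 assertion for the pair $(j-1,j)$ at $\eta$, read on $(\ker A^{j})^\perp+\eta$, including the nonvanishing hypothesis. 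That statement holds by the induction hypothesis, which closes the induction; the base case $j=0$ (where $\eta=0$ and $v^0_0$ is constant) is checked by hand. I expect this compatibility between the dilation normalization forced by V3 and the projection normalization forced by V2 to be the only delicate point, the remaining verifications being routine consequences of \eqref{omega3}, the dilation formula, and the axioms MR1 and MR5.
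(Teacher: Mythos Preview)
Your argument is correct and in substance coincides with the paper's proof. Both identify the one-dimensional coset subspaces $V_j^{(\eta)}$ (your $L^j_\eta$) and build the generators inductively using the two operations $\omega^{j+1}_\eta$ and $D_A$; in fact your construction produces exactly the same family $v^j_\eta$ as the paper's. The only organizational difference is that the paper leads with the projection (so V2 is immediate) and then verifies V3 inductively via its reformulation $\widehat{v}^{j+1}_\eta(\chi)=\widehat{v}^{j}_{\widehat A^{-1}\eta}(\widehat A^{-1}\chi)$ for $\eta,\chi\in(\ker A)^\perp$, whereas you lead with the dilation (so V3 is immediate) and then verify V2 for the digit-$0$ indices by the same transport under $\widehat A^{-1}$ to the previous level. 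The two inductions are mirror images of one another and neither buys anything the other does not.
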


\proof  First we observe that $({\bf V3})$ can be conveniently rewritten as
\begin{equation}\label{V3}
\widehat{v}^{j+1}_{\eta}(\chi)=\widehat{v}^{j}_{\widehat{A}^{-1}\eta}(\widehat{A}^{-1}\chi) \qquad\text{for all } \chi,\eta \in (\ker A)^\perp.\tag{\bf V4}
\end{equation}
Define the space 
\[
V_j^{(\eta)} :=\{f\in V_j : \widehat{f}(\chi)=0 \text{ for all } \chi\not\in(\ker A^j)^{\perp}+\eta\}.
\]
Let $f\in V_j$. Then by Lemma \ref{lem:omega}
\[
f=\sum\limits_{\eta\in D(A^j)}\omega^j_{\eta}f = \sum\limits_{\eta\in D(A^j)}f_{\eta},
\]
where $f_{\eta}\in V_j^{(\eta)}$. This implies that $V_j=\bigoplus\limits_{\eta\in D(A^j)}V_j^{(\eta)}$. By MR4 of Definition~\ref{MRA} and Lemma \ref{indep} we have $\dim V_j^{(\eta)}  \ge 1$. Since $\dim V_j = m^j$ and $|D(A^j)|=m^j$, we actually have $\dim V_j^{(\eta)}  = 1$.

The proof is by the induction on scale $j$. Assume we have constructed functions $(v^j_\eta)$ for  $j=0,\ldots, j_0$, satisfying $({\bf V0})$  and $({\bf V1})$ for $j\le j_0$, and  $({\bf V2})$ and $({\bf V3})$ for $j\le j_0-1$, where $j_0 \in \N_0$. 
 Suppose first that $\widehat{v}^{j_0}_{\eta}(\chi_0)\neq 0$ for some $\chi_0\in(\ker A^{j_0+1})^{\perp}+\eta$ and $\eta \in \widehat{G}$. We set $v^{j_0+1}_{\eta}:=\omega^{j_0+1}_{\eta}v^{j_0}_{\eta}$. Then by Lemma \ref{lem:omega}
\begin{equation*}
\widehat{v}^{j_0+1}_{\eta}(\chi)=
\left\{
\begin{array}{ll}
\widehat{v}^{j_0}_{\eta}(\chi) & {\rm for}~\chi\in (\ker A^{j_0+1})^{\perp}+\eta,\\ 
0 & {\rm for}~\chi\not\in (\ker A^{j_0+1})^{\perp}+\eta.
\end{array}
\right.
\end{equation*}
Hence, $({\bf V1})$ holds for $j=j_0+1$ and $({\bf V2})$ holds for $j=j_0$. Next we check that \eqref{V3} holds for $j=j_0$.
Let $\chi, \eta\in (\ker A)^{\perp}$. If $\chi\in (\ker A^{j_0+1})^{\perp}+\eta$, then by $({\bf V2})$ and \eqref{V3} for $j=j_0-1$, we have
\[
\widehat{v}^{j_0+1}_{\eta}(\chi)=\widehat{v}^{j_0}_{\eta}(\chi)=\widehat{v}^{j_0-1}_{\widehat{A}^{-1}\eta}(\widehat{A}^{-1}\chi)=\widehat{v}^{j_0}_{\widehat{A}^{-1}\eta}(\widehat{A}^{-1}\chi).
\]
Otherwise, if $\chi\not \in (\ker A^{j_0+1})^{\perp}+\eta$, then by $({\bf V1})$ we have
\[
\widehat{v}^{j_0+1}_{\eta}(\chi)=0=\widehat{v}^{j_0}_{\widehat{A}^{-1}\eta}(\widehat{A}^{-1}\chi).
\]
Either way, \eqref{V3} holds for $j=j_0$.

Next suppose that $\widehat{v}^{j_0}_{\eta}(\chi)= 0$ for all $\chi\in(\ker A^{j_0+1})^{\perp}+\eta$ and $\eta\in(\ker A)^{\perp}$. We set $v^{j_0+1}_{\eta}(x)=v^{j_0}_{\widehat{A}^{-1}\eta}(Ax)$. 
Then,
\[
\widehat{v}^{j_0+1}_{\eta}(\chi)  = 
 \int_G v^{j_0}_{\widehat{A}^{-1}\eta}(Ax)\overline{\chi(x)}dx
 = \widehat{ D_Av^{j_0}_{\widehat{A}^{-1}\eta}}(\chi).
 \]
 By \cite[Lemma~6.6]{BR}, the right hand side is equal to $\widehat{v}^{j_0}_{\widehat{A}^{-1}\eta}(\widehat{A}^{-1}\chi)$ for $\chi\in (\ker A)^{\perp}$ and $0$ otherwise. This proves that \eqref{V3} holds for $j=j_0$.
Likewise, $({\bf V1})$ holds for $j=j_0+1$ by the inductive assumption and $({\bf V2})$ need not be verified.
 
Finally, suppose $\widehat{v}^{j_0}_{\eta}(\chi)= 0$ for $\chi\in(\ker A^{j_0+1})^{\perp}+\eta$ and $\eta\not\in (\ker A)^{\perp}$. In this case we take for $v^{j_0+1}_{\eta}$, $\eta\in D(A^{j_0+1})$, any nonzero element from the space $V^{(\eta)}_{j_0+1}$, and then let $v^{j_0+1}_{\eta'}=v^{j_0+1}_{\eta}$ if $\eta -\eta' \in (\ker A^{j_0+1})^\perp$. Since $\widehat{v}^{j_0+1}_{\eta}(\chi) = 0$ for all $\chi\not\in(\ker A^{j_0+1})^{\perp}+\eta$ we have $({\bf V1})$, while $({\bf V2})$ and \eqref{V3} need not be checked. 

Finally, observe that by the construction all functions $v_\eta^j \in V_j^{(\eta)}$ are non-zero and $v^j_\eta=v^j_{\eta'}$ if $\eta -\eta' \in (\ker A^j)^\perp$. Since $V_j=\bigoplus\limits_{\eta\in D(A^j)}V_j^{(\eta)}$ and $\dim V_j^{(\eta)}  = 1$, we conclude that $({\bf V0})$ holds as well.
\qed

\begin{proposition}
Let $(V_j)_{j\in\N_0}$ be an MRA of $L^p(G)$, $1\le p<\infty$. Let $(v^j_{\eta})_{\eta\in D(A^j)}$ be a basis of $V_j$ given by Lemma \ref{lem:basis}. A sequence $(\varphi_j)_{j=0}^{\infty}\subset L^p(G)$ is a scaling sequence if and only if
\begin{equation}\label{eq:scaling}
\varphi_j=\sum\limits_{\eta\in D(A^j)}\alpha^j_{\eta}v^j_{\eta}, 
\end{equation}
where $\alpha^j_{\eta}\neq 0$ for all $\eta\in D(A^j)$.
\end{proposition}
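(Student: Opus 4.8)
The plan is to combine the direct-sum decomposition of $V_j$ established in Lemma~\ref{lem:basis} with the independence criterion of Lemma~\ref{indep}. Since $(v^j_\eta)_{\eta\in D(A^j)}$ is a basis of $V_j$, every $\varphi_j\in V_j$ admits a unique expansion $\varphi_j=\sum_{\eta\in D(A^j)}\alpha^j_\eta v^j_\eta$, and conversely any such combination lies in $V_j$. Thus the content of the proposition is that the translates $(T_a\varphi_j)_{a\in\ker A^j}$ form a basis of $V_j$ \emph{precisely} when all coefficients $\alpha^j_\eta$ are nonzero. I would argue one level $j$ at a time, since a scaling sequence is exactly a choice of scaling function $\varphi_j$ at every level.

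First I would reduce ``basis'' to ``linearly independent''. There are exactly $m^j=|\ker A^j|$ translates $T_a\varphi_j$, each lying in $V_j$ by MR3, while $\dim V_j=m^j$; hence these translates form a basis of $V_j$ if and only if they are linearly independent. By Lemma~\ref{indep}, linear independence of $(T_a\varphi_j)_{a\in\ker A^j}$ is equivalent to $\omega^j_\eta\varphi_j\neq 0$ for every $\eta\in D(A^j)$. So the entire statement follows once $\omega^j_\eta\varphi_j$ is identified in terms of the coefficients $\alpha^j_\eta$.

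The key computation is that $\omega^j_\eta$ acts diagonally on the basis: for $\eta,\eta'\in D(A^j)$ one has $\omega^j_\eta v^j_{\eta'}=v^j_\eta$ when $\eta=\eta'$ and $\omega^j_\eta v^j_{\eta'}=0$ otherwise. This follows from property $(\textbf{V1})$, which says $\widehat{v}^j_{\eta'}$ is supported in the coset $(\ker A^j)^\perp+\eta'$, together with \eqref{omega3}, which shows that $\omega^j_\eta$ retains exactly the frequencies in $(\ker A^j)^\perp+\eta$ and kills the rest: the two cosets coincide when $\eta=\eta'$ and are disjoint otherwise, because distinct elements of $D(A^j)$ represent distinct cosets of $\widehat G/(\ker A^j)^\perp$. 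By linearity of $\omega^j_\eta$ we then obtain $\omega^j_\eta\varphi_j=\alpha^j_\eta v^j_\eta$, and since each $v^j_\eta$ is nonzero this gives $\omega^j_\eta\varphi_j\neq 0$ if and only if $\alpha^j_\eta\neq 0$. Combined with the reduction of the previous paragraph, this yields both implications: in one direction, a scaling function forces independent translates, hence all $\omega^j_\eta\varphi_j\neq 0$, hence all $\alpha^j_\eta\neq 0$; in the other, nonvanishing coefficients make all $\omega^j_\eta\varphi_j\neq 0$, so the translates are independent and, by the cardinality count, form a basis.

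I do not anticipate a genuine obstacle; the argument is essentially a bookkeeping assembly of Lemmas~\ref{lem:omega}, \ref{indep}, and~\ref{lem:basis}. The only points requiring slight care are the cardinality matching that promotes linear independence to a basis (using $\dim V_j=m^j=|\ker A^j|$) and the verification that $\omega^j_\eta$ is diagonal on the $v^j_{\eta'}$, which rests entirely on the disjointness of the frequency supports guaranteed by $(\textbf{V1})$.
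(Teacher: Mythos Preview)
Your proposal is correct and follows essentially the same approach as the paper: expand $\varphi_j$ in the basis $(v^j_\eta)$, use Lemma~\ref{lem:omega} (via property $(\textbf{V1})$ and \eqref{omega3}) to obtain $\omega^j_\eta\varphi_j=\alpha^j_\eta v^j_\eta$, then invoke Lemma~\ref{indep} together with the dimension count $\dim V_j=m^j$. The paper's proof is slightly terser but the logical structure is identical.
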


\proof 
Suppose $(\varphi_j)_{j=0}^{\infty}\subset L^p(G)$ is a scaling sequence. By Lemma \ref{lem:basis} we can write $\varphi_j$ as in \eqref{eq:scaling}. By Lemma \ref{lem:omega} we have
\[
\omega^j_\eta \varphi_j = \alpha^j_\eta v^j_\eta \qquad \eta \in D(A^j).
\]
By Lemma \ref{indep} we have $\alpha^j_\eta \ne 0$.

Conversely, suppose $\varphi_j$ is given by \eqref{eq:scaling}, where $\alpha^j_\eta \ne 0$. Then by Lemma \ref{indep} the functions $T_a \varphi_j$, $a \in \ker A^j$, are linearly independent, and hence a basis of $V_j$ since $\dim V_j = m^j$.
\qed

\begin{corollary}\label{cor:scaling}
If $(\varphi_j)_{j=0}^{\infty}$ is a scaling sequence, then $\omega^j_{\eta}\varphi_j=\alpha^j_{\eta}v^j_{\eta}$, where $\alpha^j_{\eta}\neq 0$. In particular, the functions $(\omega^j_{\eta}\varphi_j)_{\eta\in D(A^j)}$ form a basis of the space $V_j$.
\end{corollary}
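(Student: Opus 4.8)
The plan is to read this off directly from the preceding Proposition together with the support identities in Lemmas~\ref{lem:omega} and~\ref{lem:basis}. Since $(\varphi_j)_{j=0}^\infty$ is a scaling sequence, the preceding Proposition supplies the expansion $\varphi_j=\sum_{\eta'\in D(A^j)}\alpha^j_{\eta'}v^j_{\eta'}$ with $\alpha^j_{\eta'}\neq 0$ for every $\eta'\in D(A^j)$.

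First I would apply the linear operator $\omega^j_\eta$ term by term to this expansion and evaluate $\omega^j_\eta v^j_{\eta'}$ on the Fourier side. By property~$({\bf V1})$ of Lemma~\ref{lem:basis}, the transform $\widehat{v}^j_{\eta'}$ is supported on the coset $(\ker A^j)^\perp+\eta'$, whereas by \eqref{omega3} the operator $\omega^j_\eta$ retains only the Fourier mass lying on $(\ker A^j)^\perp+\eta$. Because $D(A^j)$ is a set of representatives of distinct cosets of $\widehat G/(\ker A^j)^\perp$, for $\eta'\neq\eta$ these two cosets are disjoint, which forces $\widehat{\omega^j_\eta v^j_{\eta'}}\equiv 0$ and hence $\omega^j_\eta v^j_{\eta'}=0$; for $\eta'=\eta$ the projection leaves $v^j_\eta$ unchanged, so $\omega^j_\eta v^j_\eta=v^j_\eta$. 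Thus only the $\eta'=\eta$ term survives, yielding $\omega^j_\eta\varphi_j=\alpha^j_\eta v^j_\eta$ with $\alpha^j_\eta\neq 0$.

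Finally, since $(v^j_\eta)_{\eta\in D(A^j)}$ is a basis of $V_j$ by property~$({\bf V0})$ of Lemma~\ref{lem:basis} and each $\alpha^j_\eta$ is a nonzero scalar, the family $(\omega^j_\eta\varphi_j)_{\eta\in D(A^j)}$ consists of nonzero scalar multiples of the basis vectors and is therefore itself a basis of $V_j$. There is no serious obstacle in this argument; the only point demanding care is the disjointness of the Fourier supports across distinct cosets, which is immediate from the definition of $D(A^j)$.
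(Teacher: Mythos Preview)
Your argument is correct and follows exactly the route the paper takes: the identity $\omega^j_\eta\varphi_j=\alpha^j_\eta v^j_\eta$ is already derived in the proof of the preceding Proposition via Lemma~\ref{lem:omega} (with property~$({\bf V1})$ implicit), and the corollary is then stated without a separate proof. You have simply spelled out the Fourier-support disjointness step more explicitly than the paper does.
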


We are now ready to give the proof of Theorem~\ref{thm:scaling}.

\begin{proof}[Proof of Theorem~\ref{thm:scaling}.] Assume that $(\varphi_j)_{j=0}^{\infty}$ is a scaling sequence for an MRA $(V_j)_{j=0}^{\infty}$ of $L^p(G)$. Part $(1)$ of Theorem~\ref{thm:scaling} follows from Lemma~\ref{lem:V0}. For $(2)$, we use Corollary~\ref{cor:scaling} noting that for $\chi\in (\ker A^j)^{\perp}+\eta$, $\eta\in\widehat{G}$,
\[
\widehat{\varphi}_j(\chi)=\widehat{\omega^j_{\eta}\varphi_j}(\chi)=\alpha^j_{\eta}\widehat{v^j_{\eta}}(\chi).
\]
By ${\bf (V1)}$ of Lemma~\ref{lem:basis} there exists $\chi\in (\ker A^j)^{\perp}+\eta$ such that $\widehat{v}^j_{\eta}(\chi)\neq 0$. Since $\alpha^j_{\eta}\neq 0$, we get $\widehat{\varphi}_j(\chi)\neq 0$. To prove $(3)$, suppose on the contrary that $\widehat{\varphi}_j(\chi)= 0$ for all $j\in\N_0$. This contradicts the axiom MR2 of Definition~\ref{MRA}. To prove $(4)$, take any $\eta\in\widehat{G}$. First consider the case $\widehat{\varphi}_{j-1}(\chi_0)\neq 0$ for some $\chi_0\in (\ker A^j)^{\perp}+\eta$. Using Lemma~\ref{lem:omega} and  Corollary~\ref{cor:scaling} we have 
\[
\omega^j_{\eta}\varphi_j=\alpha^j_{\eta}v^j_{\eta}
\qquad\text{and}\qquad  \omega^j_{\eta}\varphi_{j-1}=\omega^j_{\eta}\omega^{j-1}_{\eta}\varphi_{j-1}=\alpha^{j-1}_{\eta}\omega^j_{\eta}v^{j-1}_{\eta}
\]
for some $\alpha^j_{\eta}, \alpha^{j-1}_{\eta}\neq 0$. By ${\bf (V2)}$ of Lemma~\ref{lem:basis}, for $\chi\in (\ker A^j)^{\perp}+\eta$
\[
\frac{\widehat{\varphi}_{j-1}(\chi)}{\alpha^{j-1}_{\eta}}=\frac{\widehat{\varphi}_j(\chi)}{\alpha^j_{\eta}}.
\]
The above expression implies
\[
\widehat{\varphi}_{j-1}(\chi)=\mu^j_{\eta}\widehat{\varphi}_j(\chi),
\]
where $\mu^j_{\eta}=\frac{\alpha^{j-1}_{\eta}}{\alpha^j_{\eta}}$. In the case when $\widehat{\varphi}_{j-1}(\chi)=0$ for all $\chi\in (\ker A^j)^{\perp}+\eta$, we take $\mu^j_{\eta}=0$. 

To prove $(5)$, we use Lemma \ref{lem:omega}
\begin{equation*}
\widehat{\omega^{j+1}_{\widehat A\eta}f}(\widehat{A}\chi)=
\left\{
\begin{array}{ll}
\widehat{f}(\widehat{A}\chi), & {\rm for}~\chi\in (\ker A^j)^{\perp}+\eta,\\ 
0, & {\rm otherwise}.
\end{array}
\right.
\end{equation*}
We again use Corollary~\ref{cor:scaling}. For any $\chi\in (\ker A^j)^{\perp}+\eta$, we have $\widehat{\varphi}_j(\chi)=\alpha^j_{\eta}\widehat{v}^j_{\eta}(\chi)$ and $\widehat{\varphi}_{j+1}(\widehat{A}\chi)=\alpha^{j+1}_{\widehat{A}\eta}\widehat{v}^{j+1}_{\widehat{A}\eta}(\widehat{A}\chi)$, where $\alpha^j_{\eta}$, $\alpha^{j+1}_{\eta}\neq 0$. By ${\bf (V3)}$ of Lemma~\ref{lem:basis}, it follows that 
\[
\widehat{\varphi}_{j+1}(\widehat{A}\chi)=\frac{\alpha^{j+1}_{\widehat{A}\eta}}{\alpha^j_{\eta}}\widehat{\varphi}_j(\chi).
\]
Hence (5) holds with $\gamma^j_{\eta}=\frac{\alpha^{j+1}_{\widehat{A}\eta}}{\alpha^j_{\eta}}$.

For the sufficiency part let us assume that functions $\varphi_j\in L^p(G)$ satisfy properties (1)--(5) of Theorem \ref{thm:scaling}. Set $V_j={\rm span}\{T_a\varphi_j: a\in \ker A^j\}$. Our aim is to show that $(V_j)_{j=0}^{\infty}$ is an MRA and $(\varphi_j)$ is a scaling sequence. 

MR4 follows by Lemma \ref{lem:omega}, Lemma \ref{indep}, and property (2). 
MR3 follows then from MR4. Indeed,  write $f\in V_j$ as
\[
f=\sum\limits_{k\in \ker A^j}\alpha_kT_k\varphi_j.
\]
For $a\in \ker A^j$,
\[
T_{a}f=\sum\limits_{k\in \ker A^j}\alpha_kT_{a}T_k\varphi_j
= \sum\limits_{k\in \ker A^j}\alpha_kT_{a+k}\varphi_j.
\]
Hence, $T_{a}f\in V_j$. 

To prove MR1, we restrict ourself to a basis function $\omega^j_{\eta}\varphi_j$, $\eta\in D(A^j)$. We need to verify that if $f\in V_j$, then $f\in V_{j+1}$. By Lemma~\ref{lem:omega} we have
\begin{equation}\label{eq:dec}
\omega^j_{\eta}\varphi_j = \sum\limits_{\pi\in D(A)}\omega^{j+1}_{\eta+\widehat{A}^j\pi}\omega^j_{\eta}\varphi_j.
\end{equation}
Using property $(4)$ we can write
\begin{equation}\label{dec}
\omega^j_{\eta}\varphi_j = \sum\limits_{\pi\in D(A)}\mu^{j+1}_{\eta+\widehat{A}^j\pi}\omega^{j+1}_{\eta+\widehat{A}^j\pi}\varphi_{j+1}.
\end{equation}
Hence, by Lemma~\ref{lem:omega} we have $\omega^{j+1}_{\eta+\widehat{A}^j\pi}\varphi_{j+1}\in V_{j+1}$, which proves MR1.

Next we claim that there exists a family of functions $v^j_\eta$, $j\in \N_0$, $\eta \in \widehat{G}$ satisfying the conditions $\bf (V0)$, $\bf (V1)$, $\bf (V2)$ and $\bf (V3)$ of Lemma \ref{lem:basis}. Observe that by properties (2), (4), and (5), we have
\[
\mu^j_\eta = \mu^j_{\eta'} \quad\text{and}\quad
\gamma^j_\eta = \gamma^j_{\eta'}\qquad\text{if }\eta -\eta' \in (\ker A^j)^\perp, \ j\in \N_0.
\]
We define numbers $\alpha^j_{\eta}$, $j\in \N_0$, $\eta\in \widehat G$, recursively with respect to $j$. Set $\alpha_0^0 := 1$. Define
\[
\alpha^j_{\eta} = \begin{cases} \alpha^{j-1}_{\eta}/\mu^j_{\eta} & \mu^j_{\eta}\neq 0,\\
\alpha^{j-1}_{\widehat A^{-1}\eta}\gamma^{j-1}_{\widehat A^{-1}\eta} & \mu^j_{\eta}=0 \text{ and } \eta \in (\ker A)^\perp,
\\
1 & \mu^j_{\eta}=0 \text{ and } \eta \not\in (\ker A)^\perp.
\end{cases}
\]
By construction 
\[
\alpha^j_\eta = \alpha^j_{\eta'} \ne 0 \qquad\text{if }\eta -\eta' \in (\ker A^j)^\perp, \ j\in \N_0.
\]
Set $v^j_{\eta}=\frac{\omega^j_{\eta}\varphi_j}{\alpha^j_{\eta}}$.  Then, $(v^j_{\eta})_{\eta\in D(A^j)}$ is a basis since $(\omega^j_{\eta}\varphi_j)_{\eta\in D(A^j)}$ forms a basis of the space $V_j$ by Lemma \ref{indep} and property (2). This proves $\bf (V0)$. Likewise, we deduce that $\bf (V1)$ and $\bf (V2)$ hold. To verify 
 $\bf (V3)$ we rewrite it as \eqref{V3}.  Now, if $\mu^{j+1}_\eta =0$, then \eqref{V3} follows directly from the definition of $\alpha^{j+1}_\eta$. Otherwise, we observe the fact that 
\[
 \mu^{j+1}_\eta \ne 0 \qquad \iff \qquad \widehat v^j_{\eta}(\chi_0)\ne 0 \text{ for some }\chi_0 \in (\ker A^{j+1})^\perp + \eta.
 \]
Then, we can verify \eqref{V3} inductively in a similar way as in the proof of Lemma \ref{lem:basis}. We leave details to the reader.

To prove MR5(a), it suffices to show that it holds for the basis $(v^j_{\eta})_{\eta\in D(A^j)}$. For $\chi\in(\ker A)^{\perp}$ we have $\widehat{D_Av^j_{\eta}}(\chi)=\widehat{v}^j_{\eta}(\widehat{A}^{-1}\chi)=\widehat{v}^{j+1}_{\widehat{A}\eta}(\chi)$ by $(\bf V3)$. Otherwise, if $\chi\not\in(\ker A)^{\perp}$, then $\widehat{D_Av^j_{\eta}}(\chi)=0=\widehat{v}^{j+1}_{\widehat{A}\eta}(\chi)$ by $(\bf V1)$. This implies that $v^j_{\eta}(A\cdot)=v^{j+1}_{\widehat A \eta} \in V_{j+1}$. 

To prove MR5(b), we need to show that $Pv^{j+1}_{\eta}\in V_j$.
We claim that
\begin{equation}\label{eq:Pv}
\widehat{Pv^{j+1}_{\eta}}(\chi) = |\ker A|\widehat{v}^{j+1}_{\eta}(\widehat{A}\chi)
\end{equation}
To prove \eqref{eq:Pv} we use \eqref{homo} 
\[
\int_GPv^{j+1}_{\eta}(Ax)\chi(Ax)dx = \Delta(A)\int_GPv^{j+1}_{\eta}(x)\chi(x)dx.
\]
Using the change of variables we have
\begin{eqnarray*}
\int_GPv^{j+1}_{\eta}(Ax)\chi(Ax)dx & = & \sum\limits_{a\in \ker A}\int_Gv^{j+1}_{\eta}(x+a)\chi(Ax)dx\\
& = & \sum\limits_{a\in \ker A}\int_Gv^{j+1}_{\eta}(x)\chi(Ax)dx\\
& = & |\ker A|\widehat{v}^{j+1}_{\eta}(\widehat{A}\chi).
\end{eqnarray*}
Since $G$ is compact, $\Delta(A)=1$, which yields equation (\ref{eq:Pv}).

If $\eta \in (\ker A)^\perp$, then we use the property ({\bf V3}) of Lemma~\ref{lem:basis}, which gives $\widehat{v}^{j+1}_{\eta}(\widehat{A}\chi)=\widehat{v}^j_{\widehat{A}^{-1}\eta}(\chi)$. By  (\ref{eq:Pv}) we have $Pv^{j+1}_{\eta} = |\ker A| v^j_{\widehat{A}^{-1}\eta} \in V_j$. If $\eta \not \in (\ker A)^\perp$, then we use the property ({\bf V1}) and \eqref{eq:Pv} to get $Pv^{j+1}_{\eta} = 0$.

It only remains to prove the property MR2 of Definition \ref{MRA}. Take any $\chi \in \widehat{G}$.
By property $(3)$, there exists $j_0$ such that $\widehat{\varphi}_{j_{0}}(\chi)\neq 0$ and for $j\geq j_0$, $\widehat{\varphi}_j(\chi)\neq 0$ by property $(4)$. Hence, \eqref{omega3} yields $\widehat{v}^j_{\chi}(\chi) = \frac{\widehat{\varphi}_j(\chi)}{\alpha^j_\chi}\neq 0$ for all $j\geq j_0$. We introduce functions $h_j$ for $j\geq j_0$ by
\begin{equation}\label{kap0}
h_j(x):= 1-\frac{v^j_{\chi}(x)}{\widehat{v}^j_{\chi}(\chi)}\overline{\chi(x)}, \qquad x\in G.
\end{equation}
By taking the Fourier transform, we have
\begin{equation}\label{kap}
\widehat{h}_j(\kappa)=\int_G\overline{\kappa(x)}dx-\frac{\widehat{v}^j_{\chi}(\chi+\kappa)}{\widehat{v}^j_{\chi}(\chi)}, \qquad\kappa\in\widehat{G}.
\end{equation}
For $\kappa ={\bf 0}$, $\widehat{h}_j(\kappa)= 0$, and $\widehat{h}_j(\kappa)\neq 0$ can happen only if $\kappa\in \big(\ker A^j\big)^{\perp}$ by ({\bf V1}). 

Suppose that $f\in L^p(G)$ is such that $\widehat{f}(\kappa)= 0$ for all $\kappa \not \in (\ker A^{j_0})^\perp$. Equivalently, $f(x)=f(x+a)$ for all $a\in \ker A^{j_0}$ and $x\in G$.
For $j\ge j_0$ define
\begin{equation}\label{eqn:tj}
S_j f(x)=\frac{1}{m^{j-j_0}}\sum\limits_{[a]\in \ker A^j/\ker A^{j_0}}f(x+a),
\end{equation}
where the above sum runs over representatives of cosets of $\ker A^j/\ker A^{j_0}$ and $m=|\ker A|$. The Fourier coefficients of the function $S_jf$ can be non-zero only if $\kappa \in (\ker A^{j_0})^{\perp}$. Hence, by the fact that the dual of $\ker A^j /\ker A^{j_0}$ is $(\ker A^{j_0})^\perp/(\ker A^j)^\perp$ and \cite[Lemma (23.19)]{R} they are equal to
\begin{equation}\label{fc}
\widehat{S_jf}(\kappa)=
\frac{1}{m^{j-j_0}}\sum\limits_{[a]\in \ker A^j /\ker A^{j_0}}\kappa(a)\widehat{f}(\kappa) = \begin{cases}
\widehat f(\kappa) & \text{if } \kappa \in (\ker A^{j})^{\perp}, \\
0 & \text{otherwise.}
\end{cases}
\end{equation}
Moreover, by the triangle inequality we have 
\begin{equation}\label{tra}
||S_jf||_p \le ||f||_p.
\end{equation}

We claim that 
\begin{equation}\label{eqn:hj}
S_j h_{j_0}=h_j \qquad\text{for }j \ge j_0.
\end{equation}
Indeed, by ({\bf V2}) we have 
\[
\widehat v^{j_0}_\chi(\chi+\kappa) = \widehat v^{j}_{\chi}(\chi+\kappa) \qquad\text{for } 
\kappa \in (\ker A^{j})^{\perp}, \ j\ge j_0.
\]
Hence, by \eqref{kap} we have
\begin{equation}\label{hj}
\widehat h_{j}(\kappa) = \begin{cases}
\widehat h_{j_0}(\kappa) &\text{for } 
\kappa \in (\ker A^{j})^{\perp}, \ j\ge j_0,
\\
0 & \text{otherwise.}
\end{cases}
\end{equation}
Combining \eqref{fc} and \eqref{hj} yields \eqref{eqn:hj}.

Let $\epsilon>0$. Using the fact that trigonometric polynomials are dense in $L^p(G/(\ker A^{j_0}))$, there exists a trigonometric polynomial $q=\sum_{\kappa \in \widehat{G}} c_\kappa \kappa$ such that $c_\kappa=0$ for all $\kappa \not \in (\ker A^{j_0})^\perp$ and $||h_{j_0}-q||_p <\epsilon$. Since $\widehat h_{j_0}({\bf 0})=0$ we can additionally assume that $\widehat q({\bf 0})=0$.  By our standing assumption and Proposition~\ref{prop:ker} we have 
\[
\bigcap\limits_{j\in\N_0}(\ker A^j)^{\perp}=\{\bf 0\}.
\]
Hence, if $j$ is sufficiently large, then $S_jq$ is a zero function by \eqref{fc}.  By \eqref{tra} and \eqref{eqn:hj} we have
\[
||h_j||_p= ||S_j(h_{j_0}-q)||_p \le ||h_{j_0} - q||_p <\epsilon.
\]
 Consequently, the sequence $(h_j)$ converges to 0 as $j \to \infty$ in $L^p(G)$ norm. Thus, by \eqref{kap0} we have proved that the character function $\chi(\cdot)$ is approximated by the functions $\frac{v^j_{\chi}(\cdot)}{\widehat{v}^j_{\chi}(\chi)}\in V_j$ in $L^p(G)$ norm. This completes the proof of Theorem \ref{thm:scaling}.
\qed

\section{Construction of wavelet functions}

In this section, we are interested in constructing a wavelet orthonormal basis of $L^2(G)$. Given an MRA $(V_j)_{j\in\N_0}$ of closed subspaces of $L^2(G)$, we define the wavelet spaces 
as the orthogonal complements of $V_j$ in $V_{j+1}$ and we construct wavelet functions whose shifts form bases in these spaces. In addition, we show the existence of a special type of an MRA, called minimally supported frequency MSF MRA, for every choice of an epimorphism of a compact abelian group satisfying our standing assumptions. This yields the construction of MSF wavelets on general compact abelian groups.

\begin{proposition}\label{orth}
Let $(V_j)_{j\in\N_0}$ be an MRA of $L^2(G)$ with scaling sequence $(\varphi_j)_{j\in \N_0}$. The following are equivalent:
\begin{enumerate}[(i)]
\item the system $(T_a\varphi_j)_{a\in \ker A^j}$ is orthonormal,
\item 
the system $(m^{j/2}\omega^j_{\eta}\varphi_j)_{\eta \in D(A^j)}$ is orthonormal, where the operators $\omega^j_\eta$ are as in Definition \ref{ome} and $m=|\ker A|$,
\item we have
\begin{equation}\label{eq:ons}
\langle \omega^j_{\eta}\varphi_j, \omega^j_{\eta}\varphi_j\rangle = m^{-j} \qquad\text{for all }\eta \in D(A^j).
\end{equation}
\end{enumerate}
\end{proposition}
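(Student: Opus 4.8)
The plan is to prove the two equivalences $(ii)\Leftrightarrow(iii)$ and $(i)\Leftrightarrow(ii)$ separately, which together give the full cyclic equivalence. The first of these is almost immediate from the spectral information already recorded in Lemma~\ref{lem:omega}. By \eqref{omega3}, the Fourier transform of $\omega^j_\eta\varphi_j$ is supported on the coset $(\ker A^j)^\perp+\eta$. For distinct $\eta,\eta'\in D(A^j)$ these cosets are disjoint, so by Parseval's identity on the discrete group $\widehat G$ one has $\langle\omega^j_\eta\varphi_j,\omega^j_{\eta'}\varphi_j\rangle=0$ automatically. Consequently the off-diagonal orthogonality in $(ii)$ is free of charge, and the orthonormality of $(m^{j/2}\omega^j_\eta\varphi_j)_{\eta\in D(A^j)}$ collapses to the single diagonal condition $\|m^{j/2}\omega^j_\eta\varphi_j\|^2=1$, that is, $m^j\langle\omega^j_\eta\varphi_j,\omega^j_\eta\varphi_j\rangle=1$. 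This is exactly $(iii)$, establishing $(ii)\Leftrightarrow(iii)$.

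For $(i)\Leftrightarrow(ii)$ I would exhibit the family $(\omega^j_\eta\varphi_j)$ as a unitary recombination of the translates $(T_a\varphi_j)$. Unwinding Definition~\ref{ome} and using $\overline{\eta(-a)}=\eta(a)$ together with the substitution $a\mapsto -a$ on $\ker A^j$, one obtains
\[
m^{j/2}\,\omega^j_\eta\varphi_j=\frac{1}{m^{j/2}}\sum_{a\in\ker A^j}\eta(a)\,T_a\varphi_j.
\]
Thus the change-of-basis matrix is $U=\bigl(m^{-j/2}\eta(a)\bigr)_{\eta\in D(A^j),\,a\in\ker A^j}$, precisely the normalized discrete Fourier transform matrix of the finite group $\ker A^j$. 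By the character orthogonality relations $\sum_{a\in\ker A^j}\eta(a)\overline{\eta'(a)}=m^j\delta_{\eta,\eta'}$ (the same fact invoked in the proof of Lemma~\ref{indep}), this matrix is unitary, $UU^*=I$. A unitary change of basis preserves Gram matrices: if $(T_a\varphi_j)$ is orthonormal, then $\langle m^{j/2}\omega^j_\eta\varphi_j,m^{j/2}\omega^j_{\eta'}\varphi_j\rangle=(UU^*)_{\eta,\eta'}=\delta_{\eta,\eta'}$, giving $(i)\Rightarrow(ii)$; applying the identical computation to $U^{-1}=U^*$ yields the converse $(ii)\Rightarrow(i)$.

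The conceptual content here is light, since every ingredient is already in place: the disjointness of supports comes from Lemma~\ref{lem:omega}, and the unitarity of the discrete Fourier matrix is the observation underlying Lemma~\ref{indep}. Accordingly, the only real point of care---the \emph{de facto} main obstacle---is the bookkeeping of the normalization constants. One must track the factors $m^{-j}$ in Definition~\ref{ome}, the weight $m^{j/2}$ in statement $(ii)$, and the target value $m^{-j}$ in $(iii)$, and confirm that they conspire to make $U$ exactly unitary (rather than unitary up to a stray power of $m$). Once these constants are verified against the character orthogonality relations, the three conditions line up as claimed.
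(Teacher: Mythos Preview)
Your proposal is correct and follows essentially the same approach as the paper: both arguments rest on the disjointness of the Fourier supports of the $\omega^j_\eta\varphi_j$ from Lemma~\ref{lem:omega} (your Parseval step, the paper's identity \eqref{orth2}) together with the unitarity of the discrete Fourier transform matrix of $\ker A^j$ (character orthogonality, the paper's \eqref{eq:eta}). The only cosmetic difference is that the paper writes the inverse relation $T_k\varphi_j=\sum_{\eta}\overline{\eta(k)}\,\omega^j_\eta\varphi_j$ and computes $\langle T_k\varphi_j,T_n\varphi_j\rangle$ directly in terms of the diagonal quantities $\langle\omega^j_\eta\varphi_j,\omega^j_\eta\varphi_j\rangle$, whereas you package the same computation as ``unitary change of basis preserves Gram matrices''; the content is identical.
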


\proof By Lemma \ref{lem:omega} we have
\begin{equation}\label{eq:trans}
T_k\varphi_j = \sum\limits_{\eta\in D(A^j)} \omega^j_\eta(T_k\varphi_j)= \sum\limits_{\eta\in D(A^j)}\overline{\eta(k)}\omega^j_{\eta}\varphi_j.
\end{equation}
By the Plancherel formula and \eqref{omega3} for any $f,g\in L^2(G)$ we have
\begin{equation}\label{orth2}
\langle \omega^j_\eta f, \omega^j_{\eta'} g \rangle = \langle \widehat {\omega^j_\eta f}, \widehat{\omega^j_{\eta'} g}\rangle = 0 \qquad \text{for } \eta \not= \eta' \in D(A^j).
\end{equation}
Hence, for any $k, n\in \ker A^j$
\begin{eqnarray*}
\langle T_k\varphi_j, T_n\varphi_j\rangle & = & \bigg\langle \sum\limits_{\eta\in D(A^j)}\overline{\eta(k)}\omega^j_{\eta}\varphi_j, \sum\limits_{\eta'\in D(A^j)}\overline{\eta'(n)}\omega^j_{\eta'}\varphi_j\bigg\rangle\\
& = & \sum\limits_{\eta\in D(A^j)}\overline{\eta(k)}\eta(n)\langle \omega^j_{\eta}\varphi_j, \omega^j_{\eta}\varphi_j\rangle.
\end{eqnarray*}
From \cite[Lemma 23.19]{R}, we have
\begin{equation}\label{eq:eta}
	\sum\limits_{\eta\in D(A^j)}\eta(n-k)=
	\left\{
	\begin{array}{ll}
		|\ker A^j|, & {\rm for}~n=k,\\ 
		0, & {\rm otherwise}.
	\end{array}
	\right.
\end{equation}
This gives the required equation~(\ref{eq:ons}) and this argument can be reversed.
\end{proof}

From now on we will assume that $(\varphi_j)_{j\in \N_0}$ is an orthonormal scaling sequence. That is, \eqref{eq:ons} in the above proposition holds for all $j\in \N_0$.
Recall that by \eqref{dec} we have
\[
\omega^j_{\eta}\varphi_j = \sum\limits_{\pi\in D(A)}\mu^{j+1}_{\eta+\widehat{A}^j\pi}\omega^{j+1}_{\eta+\widehat{A}^j\pi}\varphi_{j+1},
\]
where coefficients $\mu^j_\eta$ are defined as in Theorem \ref{thm:scaling}. Then, by Proposition \ref{orth} and \eqref{orth2}
\begin{eqnarray*}
	m^{1-j} & = & \langle \omega^{j-1}_{\eta}\varphi_{j-1}, \omega^{j-1}_{\eta}\varphi_{j-1}\rangle\\
	& = & \bigg\langle \sum\limits_{\pi\in D(A)}\mu^{j}_{\eta+\widehat{A}^{j-1}\pi}\omega^{j}_{\eta+\widehat{A}^{j-1}\pi}\varphi_j, \sum\limits_{\pi'\in D(A)}\mu^{j}_{\eta+\widehat{A}^{j-1}\pi'}\omega^{j}_{\eta+\widehat{A}^{j-1}\pi'}\varphi_j \bigg\rangle\\
	& = & \sum\limits_{\pi\in D(A)}|\mu^{j}_{\eta+\widehat{A}^{j-1}\pi}|^2\langle \omega^{j}_{\eta+\widehat{A}^{j-1}\pi}\varphi_j, \omega^{j}_{\eta+\widehat{A}^{j-1}\pi}\varphi_j,\rangle,
\end{eqnarray*}
where the last equality is a consequence of Lemma \ref{lem:omega}.
Hence, by \eqref{eq:ons} we have
\begin{equation}\label{eq:murelation}
\sum\limits_{\pi\in D(A)}|\mu^{j}_{\eta+\widehat{A}^{j-1}\pi}|^2 = m.
\end{equation}

Now our aim is to find the wavelet spaces and wavelet bases. Let $(V_j)$ be an MRA of $L^2(G)$ and $(\varphi_j)$ be an orthonormal scaling sequence. We aim to find wavelet functions $\psi^{\nu}$, $\nu=1, \ldots, m-1$ in the space $V_{j+1}$ such that the systems $(T_{\gamma}\psi^{\nu})_{\gamma\in \ker A^j}$ are orthonormal, mutually orthogonal for different values of $\nu$, and orthogonal to the space $V_j$. To construct such functions we follow the procedure described below.

We write $D(A)=\{\pi_0, \pi_1, \ldots, \pi_{m-1} \}$, where $\pi_0=0$.
We define $b_{0k}=\mu^{j+1}_{\eta+\widehat{A}^j\pi_k}/\sqrt{m}$, where $\eta \in D(A^j)$, $k=0, \ldots, m-1$. By equation~(\ref{eq:murelation}), we have  
\[
\sum\limits_{k=0}^{m-1}|b_{0k}|^2 =1.
\]
We can extend this row to an $m\times m$ unitary matrix $B = (b_{\nu,k})_{\nu,k=0}^{m-1}$. For example, we can use Householder's transform as in \cite[(9.19)]{NPS}. We set 
\[
\alpha^{\nu,j}_{\eta+\widehat{A}^j\pi_k}=\sqrt{m}b_{\nu,k} \qquad\text{for }\nu=1, 2, \ldots, m-1, \ \eta \in D(A^j),\  k=0, \ldots, m-1.
\]
By \eqref{digit1} we have defined $\alpha^{\nu,j}_{\chi}$ for all $\chi\in D(A^{j+1})$. Then we extend this sequence to  $\widehat{G}$ by setting 
\[
\alpha^{\nu,j}_{\chi}=\alpha^{\nu,j}_{\eta}
\qquad\text{for }\chi\in (\ker A^{j+1})^{\perp}+\eta,\ \eta \in D(A^{j+1}).
\]
 Now, we define the wavelet functions $\psi^{\nu}_j$ for $\nu=1, \ldots, m-1$, in terms of Fourier transform by the formulas
\begin{equation}\label{wave1}
\widehat{\psi}^{\nu}_j(\chi)=\alpha^{\nu,j}_{\chi}\widehat{\varphi}_{j+1}(\chi) \qquad\mbox{for } \chi\in\widehat{G},
\end{equation}
and the wavelet spaces by
\[
W^{(\nu)}_j:= {\rm span}\{T_a \psi^{\nu}_j: a\in \ker A^j\}.
\]
Then we have the following theorem.

\begin{theorem}\label{wavelet}
Suppose $(V_j)_{j\in \N_0}$ is an MRA of $L^2(G)$ and $(\varphi_j)_{j\in \N_0}$ is an orthonormal scaling sequence. Then, for any $j\in \N_0$ we have
\begin{equation}\label{wavelet0}
V_{j+1} = V_j \oplus W^{(1)}_j \oplus \cdots \oplus W^{(m-1)}_j,
\end{equation}
and the system $(T_a \psi^{\nu}_j)_{a\in \ker A^j}$ is an orthonormal basis of the space $W^{(\nu)}_j$ for $\nu=1, \ldots, m-1$.
\end{theorem}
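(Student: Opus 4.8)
The plan is to reduce the entire statement to a single fiberwise orthogonality relation, obtained by applying the localization operators $\omega^j_\eta$ and exploiting that the matrix $B$ is unitary. It is convenient to treat the scaling generator and the wavelet generators on equal footing, so I would write $\psi^0_j := \varphi_j$ and let the index $\nu$ range over $\{0,1,\dots,m-1\}$. The first step is to compute $\omega^j_\eta\psi^\nu_j$ for $\eta\in D(A^j)$. Since $(\ker A^j)^{\perp}+\eta$ is the disjoint union of the cosets $(\ker A^{j+1})^{\perp}+\eta+\widehat{A}^j\pi_k$, $k=0,\dots,m-1$, Lemma~\ref{lem:omega} gives $\omega^j_\eta f=\sum_{k=0}^{m-1}\omega^{j+1}_{\eta+\widehat{A}^j\pi_k}f$ for any $f$, and on the $k$-th coset the sequence $\alpha^{\nu,j}_\chi$ equals the constant $\sqrt{m}\,b_{\nu,k}$. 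Combined with \eqref{wave1}, this yields
\[
\omega^j_\eta\psi^\nu_j=\sqrt{m}\sum_{k=0}^{m-1}b_{\nu,k}\,\omega^{j+1}_{\eta+\widehat{A}^j\pi_k}\varphi_{j+1},\qquad\nu=0,1,\dots,m-1,
\]
where for $\nu=0$ this is exactly \eqref{dec}, recalling $b_{0,k}=\mu^{j+1}_{\eta+\widehat{A}^j\pi_k}/\sqrt{m}$. Thus, for each fixed $\eta$, the vectors $(\omega^j_\eta\psi^\nu_j)_{\nu=0}^{m-1}$ are obtained from $(\omega^{j+1}_{\eta+\widehat{A}^j\pi_k}\varphi_{j+1})_{k=0}^{m-1}$ by the unitary matrix $B$.

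Next I would read off the orthogonality. The generators $\omega^{j+1}_\zeta\varphi_{j+1}$, $\zeta\in D(A^{j+1})$, are pairwise orthogonal by \eqref{orth2} (disjoint Fourier supports) and have squared norm $m^{-(j+1)}$ by \eqref{eq:ons}. Feeding this into the displayed formula and using that $B$ has orthonormal rows, a direct computation gives
\[
\langle\omega^j_\eta\psi^\nu_j,\omega^j_{\eta'}\psi^{\nu'}_j\rangle=m^{-j}\,\delta_{\nu,\nu'}\,\delta_{\eta,\eta'},\qquad \eta,\eta'\in D(A^j),\ \nu,\nu'\in\{0,\dots,m-1\},
\]
the vanishing for $\eta\ne\eta'$ again coming from disjoint supports via \eqref{orth2}.

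From here the translate statements follow exactly as in the proof of Proposition~\ref{orth}. Expanding $T_a\psi^\nu_j=\sum_{\eta\in D(A^j)}\overline{\eta(a)}\,\omega^j_\eta\psi^\nu_j$ as in \eqref{eq:trans} and using \eqref{orth2} together with \eqref{eq:eta}, one obtains $\langle T_k\psi^\nu_j,T_n\psi^{\nu'}_j\rangle=\delta_{\nu,\nu'}\delta_{k,n}$ for $\nu,\nu'\ge1$ and $\langle T_k\psi^\nu_j,T_n\varphi_j\rangle=0$ for $\nu\ge1$. This simultaneously shows that each $(T_a\psi^\nu_j)_{a\in\ker A^j}$ is orthonormal (hence an orthonormal basis of $W^{(\nu)}_j$ by the definition of that space), that the spaces $W^{(\nu)}_j$ are mutually orthogonal, and that each is orthogonal to $V_j$.

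Finally, I would assemble the direct sum. Each $\omega^{j+1}_\zeta\varphi_{j+1}$ lies in $V_{j+1}$ by Lemma~\ref{lem:omega}, so $\psi^\nu_j=\sum_{\eta\in D(A^j)}\omega^j_\eta\psi^\nu_j\in V_{j+1}$ and, by shift-invariance (MR3), $W^{(\nu)}_j\subset V_{j+1}$. The subspaces $V_j,W^{(1)}_j,\dots,W^{(m-1)}_j$ are then mutually orthogonal subspaces of $V_{j+1}$, each of dimension $|\ker A^j|=m^j$, so their orthogonal sum has dimension $m^j+(m-1)m^j=m^{j+1}=\dim V_{j+1}$, the last equality by MR4. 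Hence the orthogonal sum fills $V_{j+1}$, which is \eqref{wavelet0}. I expect the only genuine obstacle to be the first step: organizing the two nested coset decompositions (the inclusion $(\ker A^{j+1})^{\perp}\subset(\ker A^j)^{\perp}$ with representatives $\widehat{A}^j\pi_k$, and the extension rule defining $\alpha^{\nu,j}_\chi$) so that the unitary matrix $B$ emerges cleanly; once that bookkeeping is settled, the orthogonality relation and the dimension count are routine.
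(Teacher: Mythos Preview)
Your proposal is correct and follows essentially the same approach as the paper: localize via the operators $\omega^j_\eta$, express $\omega^j_\eta\psi^\nu_j$ as a $B$-combination of the orthogonal pieces $\omega^{j+1}_{\eta+\widehat A^j\pi_k}\varphi_{j+1}$, use unitarity of $B$ together with \eqref{orth2} and \eqref{eq:ons} to read off the orthogonality relations, and finish with the dimension count $m\cdot m^j=m^{j+1}$. Your unified treatment of $\nu=0,\dots,m-1$ via $\psi^0_j:=\varphi_j$ is a mild streamlining of the paper's version, which handles the cases $W^{(\nu)}_j\perp V_j$ and $W^{(\nu)}_j\perp W^{(\kappa)}_j$ separately, but the underlying argument is the same.
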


As a corollary of Theorem \ref{wavelet} and MR2 the wavelet system
\[
\{ T_a \psi^{\nu}_j: a\in \ker A^j, j\in \N_0, \nu=1,\ldots,m-1 \},
\]
together with the constant function $\varphi_0 \equiv 1$ forms an orthonormal basis of $L^2(G)$.

\proof
For any fixed $\eta \in D(A^j)$ and $j\in \N_0$, by \eqref{dec} we have
\begin{equation}\label{eq:phi}
\omega^j_{\eta}\varphi_j=\sum_{k=0}^{m-1}\mu^{j+1}_{\eta+\widehat{A}^j\pi_k}\omega^{j+1}_{\eta+\widehat{A}^j\pi_k}\varphi_{j+1}.
\end{equation}
Analogously, by Lemma \ref{lem:omega} and \eqref{wave1} we have
\begin{equation}\label{eq:psi}
\omega^j_{\eta}\psi^{\nu}_j=\sum_{k=0}^{m-1} \alpha^{\nu,j}_{\eta+\widehat{A}^j\pi_k}\omega^{j+1}_{\eta+\widehat{A}^j\pi_k}\varphi_{j+1}.
\end{equation}
In particular, \eqref{eq:psi} implies that $\psi^\nu_j \in V_{j+1}$ and hence $W^{(\nu)}_j\subset V_{j+1}$ for all $\nu=1, \ldots, m-1$. 

We claim that:
\begin{enumerate}[$(i)$]
\item $W_j^{(\nu)}\perp V_j$ for all $\nu=1, \ldots, m-1$,
\item $W_j^{(\nu)}\perp W_j^{(\kappa)}$ for all $\nu\neq \kappa$, $\nu,\kappa = 1, \ldots, m-1$. 
\end{enumerate}
For $(i)$, first note that $\sum\limits_{k=0}^{m-1}\alpha^{\nu,j}_{\eta+\widehat{A}^j\pi_k}\overline{\mu^{j+1}_{\eta+\widehat{A}^j\pi_k}}=0$ by the fact that the matrix $B$ constructed above is unitary. Using \eqref{eq:ons}, \eqref{orth2}, \eqref{eq:phi}, and \eqref{eq:psi} we have 
\begin{eqnarray*}
\langle \omega^j_{\eta}\psi^{\nu}_j, \omega^j_{\eta}\varphi_j\rangle & = & \bigg\langle \sum\limits_{k=0}^{m-1}\alpha^{\nu,j}_{\eta+\widehat{A}^j\pi_k}\omega^{j+1}_{\eta+\widehat{A}^j\pi_k}\varphi_{j+1}, \sum\limits_{k'=0}^{m-1}\mu^{j+1}_{\eta+\widehat{A}^j\pi_{k'}}\omega^{j+1}_{\eta+\widehat{A}^j\pi_{k'}}\varphi_{j+1}\bigg\rangle\\
& = & \sum\limits_{k=0}^{m-1}\alpha^{\nu,j}_{\eta+\widehat{A}^j\pi_k}\overline{\mu^{j+1}_{\eta+\widehat{A}^j\pi_k}}\langle \omega^{j+1}_{\eta+\widehat{A}^j\pi_k}\varphi_{j+1}, \omega^{j+1}_{\eta+\widehat{A}^j\pi_k}\varphi_{j+1}\rangle =0.\\
\end{eqnarray*} 
This proves $(i)$ by \eqref{orth2}.
Likewise, since $B$ is unitary, we have that $\sum\limits_{k=0}^{m-1}\alpha^{\nu,j}_{\eta+\widehat{A}^j\pi_k}\overline{\alpha^{\kappa,j}_{\eta+\widehat{A}^j\pi_k}}= m \delta_{\nu,\kappa}$ for $\nu, \kappa=1,\ldots,m-1$. Hence, 
\[
\langle \omega^j_{\eta}\psi^{\nu}_j, \omega^j_{\eta}\psi^{\kappa}_j\rangle = \sum\limits_{k=0}^{m-1}\alpha^{\nu,j}_{\eta+\widehat{A}^j\pi_k}\overline{\alpha^{\kappa,j}_{\eta+\widehat{A}^j\pi_k}}\langle \omega^{j+1}_{\eta+\widehat{A}^j\pi_k}\varphi_{j+1}, \omega^{j+1}_{\eta+\widehat{A}^j\pi_k}\varphi_{j+1}\rangle =m^{-j}\delta_{\nu,\kappa}.
\] 
This proves our claim $(ii)$. Moreover, by Proposition \ref{orth}, $(T_k\psi^{\nu}_j)_{k\in \ker A^j}$ is an orthonormal basis of $W_j^{(\nu)}$. Since
\[
\dim {V_j} = \dim {W_j^{(\nu)}} = m^j
\]
and
\[
 V_j \oplus W^{(1)}_j \oplus \cdots \oplus W^{(m-1)}_j \subset V_{j+1}
\]
the dimension count implies the equality in the above inclusion.
\qed

Next we tackle the problem of the existence of an MRA for general compact abelian groups.
Let $(\varphi_j)_{j\in \N_0}$ be a scaling sequence of an MRA. Note that by Theorem \ref{thm:scaling} the supports of $\widehat \varphi_j$ satisfy
\[
|\operatorname{supp} \widehat \varphi_j| \ge m^j \qquad\text{for all }j\in \N_0.
\]
This motivates the following definition of minimally supported frequency (MSF) multiresolution analysis.

\begin{definition}
We say that an MRA $(V_j)_{j\in \N_0}$ is MSF if its scaling sequence $(\varphi_j)_{j\in \N_0}$ satisfies
\begin{equation}\label{MSF5}
|\operatorname{supp} \widehat \varphi_j| = m^j \qquad\text{for all }j\in \N_0.
\end{equation}
\end{definition}

The following theorem characterizes all minimally supported frequency MRAs.

\begin{theorem}\label{MSF} Suppose that $(V_j)_{j\in \N_0}$ is an MSF multiresolution analysis. Then there exists a sequence $(K_j)_{j\in\N_0}$ of subsets of $\widehat G$ such that
\begin{equation}\label{MSF0}
V_j = \{ f \in L^2(G): \operatorname{supp} \widehat f \subset K_j \}
\end{equation}
satisfying for all $j\in \N_0$ the following properties:
\begin{enumerate}[(i)]
\item $K_0=\{0\}$,
\item $|K_j \cap (\eta + (\ker A^j)^\perp)|=1$ for all $\eta \in D(A^j)$,
\item $K_j \subset K_{j+1}$,
\item $\widehat A (K_j) \subset K_{j+1}$, and
\item $\bigcup_{j=0}^\infty K_j = \widehat G$.
\end{enumerate}
Conversely, if a sequence $(K_j)_{j\in\N_0}$ of subsets of $\widehat G$ satisfies $(i)$--$(v)$, then $(V_j)_{j\in \N_0}$ given by \eqref{MSF0} is an MSF MRA.
\end{theorem}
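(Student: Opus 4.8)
The plan is to translate the five axioms MR1--MR5 of Definition~\ref{MRA} into the five properties $(i)$--$(v)$ of the sets $K_j$, using the scaling sequence characterization of Theorem~\ref{thm:scaling} as the bridge. The key structural observation is that the MSF condition \eqref{MSF5} forces each support $\operatorname{supp}\widehat\varphi_j$ to be a minimal transversal of the cosets of $(\ker A^j)^\perp$: by property~$(2)$ of Theorem~\ref{thm:scaling} the support meets every coset $\eta+(\ker A^j)^\perp$, so $|\operatorname{supp}\widehat\varphi_j| \ge m^j$, and \eqref{MSF5} says this is an equality, hence $\widehat\varphi_j$ has exactly one nonzero frequency in each such coset. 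I would therefore set $K_j := \operatorname{supp}\widehat\varphi_j$ as the natural candidate and verify that the space $V_j = \overline{\operatorname{span}}\{T_a\varphi_j : a\in\ker A^j\}$ equals the frequency-support space in \eqref{MSF0}. This identification follows from Lemma~\ref{lem:omega}, since the operators $\omega^j_\eta\varphi_j$, $\eta\in D(A^j)$, have disjoint supports and form a basis of $V_j$ by Corollary~\ref{cor:scaling}; a function lies in $V_j$ precisely when its Fourier transform is supported on the single-point-per-coset set $K_j$.

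Once the identification $K_j = \operatorname{supp}\widehat\varphi_j$ is in place, the five properties fall out one by one from Theorem~\ref{thm:scaling}. Property~$(i)$ is immediate from part~$(1)$ of that theorem, which forces $\widehat\varphi_0$ to be supported at $\{\mathbf 0\}$. Property~$(ii)$ is exactly the restatement of the minimality discussed above, combining part~$(2)$ with \eqref{MSF5}. For the nesting property~$(iii)$, I would use the relation~\eqref{dec} together with part~$(4)$: whenever $\widehat\varphi_{j}(\chi)\ne 0$, the proportionality $\widehat\varphi_{j}(\chi)=\mu^{j+1}_\eta\widehat\varphi_{j+1}(\chi)$ on each coset forces $\widehat\varphi_{j+1}(\chi)\ne 0$ as well, so $K_j\subset K_{j+1}$. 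Property~$(iv)$ comes from part~$(5)$, the dilation relation $\widehat\varphi_{j+1}(\widehat A\chi)=\gamma^j_\eta\widehat\varphi_j(\chi)$ with $\gamma^j_\eta\ne 0$, which shows $\chi\in K_j$ implies $\widehat A\chi\in K_{j+1}$. Property~$(v)$ follows from part~$(3)$: for each $\chi\in\widehat G$ some $\widehat\varphi_j(\chi)\ne 0$, so $\chi\in K_j$.

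For the converse direction I would reverse this dictionary: given sets $(K_j)$ satisfying $(i)$--$(v)$, define $\widehat\varphi_j$ to be the indicator-type function taking value $1$ at the unique point of $K_j$ in each coset and $0$ elsewhere, and check that the resulting $(\varphi_j)$ satisfies properties $(1)$--$(5)$ of Theorem~\ref{thm:scaling}, whence $(V_j)$ defined by \eqref{MSF0} is an MRA with orthonormal scaling sequence (orthonormality being automatic by Proposition~\ref{orth}, since each $\omega^j_\eta\varphi_j$ is a single character of norm $m^{-j/2}$). Properties $(1)$, $(2)$, $(3)$ match $(i)$, $(ii)$, $(v)$ directly; for $(4)$ and $(5)$ the proportionality constants can be taken to be $1$, since on each relevant coset both $\widehat\varphi_{j-1}$ and $\widehat\varphi_j$ are supported at a single common point by $(ii)$ and $(iii)$, and the dilation compatibility $(iv)$ supplies $(5)$. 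I expect the main obstacle to be the careful bookkeeping in the converse, specifically verifying that properties $(ii)$--$(iv)$ interact correctly so that the single support point of $K_{j}$ in a coset sits inside the support of $K_{j+1}$ in the corresponding finer coset, making the ratios $\mu^{j+1}_\eta$ and $\gamma^j_\eta$ well-defined and nonzero; the measure-theoretic subtlety that $|\operatorname{supp}\widehat\varphi_j|$ counts a discrete set and the identification of $V_j$ with a closed subspace via Plancherel also deserve care.
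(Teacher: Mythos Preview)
Your proposal is correct and follows essentially the same route as the paper: set $K_j=\operatorname{supp}\widehat\varphi_j$, verify \eqref{MSF0} by a dimension/basis argument, then read off $(i)$--$(v)$ from parts $(1)$--$(5)$ of Theorem~\ref{thm:scaling}, and for the converse take $\widehat\varphi_j=\mathbf{1}_{K_j}$ and run Theorem~\ref{thm:scaling} backwards. One small correction: in checking property~$(4)$ for the converse, the constant $\mu^{j}_\eta$ must sometimes be $0$ (namely when the finer coset $\eta+(\ker A^{j})^\perp$ misses $K_{j-1}$), not always $1$; this does not affect the argument.
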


\begin{proof}
Suppose that $(V_j)_{j\in \N_0}$ is an MSF MRA. Define sets $K_j=\operatorname{supp} \widehat \varphi_j$. We claim that \eqref{MSF0} holds. Indeed, the inclusion $\subset$ in \eqref{MSF0} is trivial. By MR4 and \eqref{MSF5} the dimensions of the spaces in \eqref{MSF0} are both equal to $m^j$. Hence we have an equality in \eqref{MSF0}.

Parts (1), (2), and (3) of Theorem \ref{thm:scaling} imply $(i)$, $(ii)$, and $(v)$, respectively. By part (4) of Theorem \ref{thm:scaling}, if $\chi \in K_{j-1}$ for some $j\in \N$, then $\widehat \varphi_{j-1}(\chi) \ne 0$ implies that $\widehat \varphi_{j}(\chi) \ne 0$. This proves $(iii)$. Likewise, by part (5) of Theorem \ref{thm:scaling}, if $\chi \in K_j$, then $\widehat \varphi_{j}(\chi) \ne 0$ implies that 
$\widehat \varphi_{j+1}(\widehat A\chi) \ne 0$. Hence, $\widehat A \chi \in K_{j+1}$, which proves $(iv)$.

Conversely, if a sequence $(K_j)_{j\in\N_0}$ of subsets of $\widehat G$ satisfies $(i)$--$(v)$, then we define a sequence of functions $(\varphi_j)$ by
\begin{equation}\label{MSFv}
\widehat \varphi_j = {\bf 1}_{K_j}.
\end{equation}
Let $V_j = \operatorname{span}\{ T_a \varphi_j: a \in \ker A^j\}$. By Lemma \ref{indep} $\dim V_j=m^j$. On the other hand, by $(ii)$ we have $|K_j|=m^j$, which implies \eqref{MSF0} by the above argument. Likewise, using Theorem \ref{thm:scaling} one can verify that $(\varphi_j)_{j\in \N_0}$ is a scaling sequence.
\end{proof}

The following theorem proves that there always exist MRAs under our standing assumptions.

\begin{theorem}\label{con-MSF}
Suppose $G$ is a compact abelian group and an epimorphism $A: G \to G$ satisfies the standing assumptions: $\ker A$ is finite and $\bigcup\limits_{j\in\N_0}\ker A^j$ is dense in $G$. Then, there exists an MSF MRA $(V_j)_{j\in \N_0}$ associated with $(G,A)$.
\end{theorem}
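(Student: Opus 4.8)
The plan is to reduce the existence of an MSF MRA to the purely combinatorial task of producing sets $(K_j)_{j\in\N_0}$ satisfying conditions $(i)$--$(v)$ of Theorem~\ref{MSF}, and then to build these sets by hand. Writing $H_j := (\ker A^j)^\perp$, recall that $\widehat A$ is an isomorphism of $\widehat G$ onto $H_1=(\ker A)^\perp$, so the $H_j = \widehat A^j(\widehat G)$ form a strictly decreasing chain of subgroups with $[\widehat G : H_j]=m^j$ (where $m=|\ker A|$), with $\widehat A(H_j)=H_{j+1}$, and with $\bigcap_j H_j=\{\mathbf 0\}$ by Proposition~\ref{prop:ker}. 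In this language $(ii)$ asks that each $K_j$ be a transversal (one representative per coset) of $H_j$ in $\widehat G$, while $(iii)$ and $(iv)$ demand that these transversals be nested and compatible with $\widehat A$, and $(v)$ that they exhaust $\widehat G$.

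I would construct the $K_j$ by induction on $j$, starting from $K_0=\{\mathbf 0\}$. Given $K_j$, I form the forced set $F:=K_j\cup\widehat A(K_j)$; properties $(iii)$ and $(iv)$ require $K_{j+1}\supseteq F$. The crucial lemma is that $F$ is a partial transversal of $H_{j+1}$, i.e.\ distinct elements of $F$ lie in distinct cosets of $H_{j+1}$. Indeed, elements of $K_j$ already lie in distinct $H_j$-cosets, hence in distinct $H_{j+1}$-cosets; the image $\widehat A(K_j)$ lies in $H_1$ and, by injectivity of $\widehat A$, meets each of the $m^j$ cosets of $H_{j+1}$ inside $H_1$ exactly once; and a cross collision between $\chi\in K_j$ and $\widehat A(\chi')\in\widehat A(K_j)$ forces $\chi\in K_j\cap H_1$. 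Here one uses the automatic identity $K_j\cap H_1=\widehat A(K_{j-1})$ (from $(iv)$ at the previous step together with a cardinality count) to write $\chi=\widehat A(\zeta)$ with $\zeta\in K_{j-1}\subset K_j$, and then injectivity of $\widehat A$ plus the transversal property of $K_j$ give $\zeta=\chi'$, hence $\chi=\widehat A(\chi')$. Thus $F$ extends to a full transversal $K_{j+1}$ of $H_{j+1}$ by choosing a representative for each remaining (``free'') coset, and this automatically yields $(i)$--$(iv)$.

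Everything so far leaves the representatives of the free cosets entirely unconstrained, and the whole weight of the theorem falls on arranging $(v)$, which I expect to be the main obstacle: conditions $(ii)$--$(iv)$ are rigid, and one must spend the remaining freedom to cover all of $\widehat G$. Since $G$ is second countable, $\widehat G$ is countable; fix an enumeration $\widehat G=\{\chi_1,\chi_2,\dots\}$ and adopt the rule that each free coset receives as representative its least-indexed element. To see that this captures every $\chi$, note first that the coset $\chi+H_{j+1}$ shrinks to $\{\chi\}$, so for $j$ large $\chi$ is the least-indexed element of its own coset; hence if that coset is ever free at such a stage, $\chi$ is captured. It therefore suffices to rule out that $\chi+H_{j+1}$ is blocked by $F$ for all large $j$.

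I would organize this last point by the $\widehat A$-order of an element together with the enumeration. If some element stays uncaptured, choose an uncaptured $u$ of minimal $\widehat A$-order; using $(iv)$, which pushes a captured $\widehat A^{-1}u$ forward to capture $u$, one shows $u\notin H_1$. Since every member of $K_j$ has the form $\widehat A^t(\rho)$ with $\rho$ a filler or $\mathbf 0$, and the elements lying in $H_1$ cannot meet $u+H_{j+1}$ when $u\notin H_1$, any element of $F$ blocking $u+H_{j+1}$ must be a filler introduced at an earlier stage in a coset then containing $u$; by the least-index rule such a filler has index strictly smaller than that of $u$. As each fixed element blocks $u$ at only finitely many stages (its difference with $u$ lies in only finitely many $H_i$), sustaining a block for all large $j$ would require infinitely many distinct blockers of index below $\mathrm{index}(u)$, which is absurd. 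Hence $u$ is eventually captured, a contradiction, so $(v)$ holds. Feeding $(K_j)$ into Theorem~\ref{MSF} then produces the desired MSF MRA.
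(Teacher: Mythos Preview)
Your proposal is correct and follows essentially the same approach as the paper: reduce to Theorem~\ref{MSF}, build $K_{j+1}$ from the forced set $K_j\cup\widehat A(K_j)$ (shown to be a partial transversal via the identity $K_j\cap H_1=\widehat A(K_{j-1})$), and fill the remaining cosets by a least-index rule from an enumeration of $\widehat G$. The only cosmetic differences are that the paper enumerates $\widehat G\setminus\widehat A(\widehat G)$ rather than all of $\widehat G$, and for property $(v)$ the paper tracks the fixed representative $\chi\in K_{j_0}$ of an uncaptured element and shows $\chi_m-\chi\in\bigcap_j H_j=\{\mathbf 0\}$, whereas you obtain the same contradiction by counting blockers of bounded index; both arguments rest on the same finiteness coming from $\bigcap_j H_j=\{\mathbf 0\}$.
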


\begin{proof}
By Theorem \ref{MSF} it suffices to construct a sequence $(K_j)_{j\in\N_0}$ of subsets of $\widehat G$ satisfying $(i)$--$(v)$. By the standing assumptions $G$ is a separable compact abelian group and hence $\widehat G$ is discrete and countable. We enumerate $\widehat G \setminus \widehat A(\widehat G)$ as $\{\chi_1,\chi_2, \ldots \}$. Define $K_0=\{0\}$. Assume that we have already defined sets $K_0,\ldots,K_{j_0}$ satisfying the following three properties for all $1\le j\le j_0$:
\begin{align}
|K_j \cap (\eta + \widehat A^j(\widehat G))| &=1 \qquad\text{ for all }\eta \in \widehat G,
\label{i1}
\\
K_{j-1} & \subset K_j,
\label{i2}
\\
\widehat A(\widehat G) \cap K_j & = \widehat A(K_{j-1}).
\label{i4}
\end{align}
Our goal is to construct a set $K_{j_0+1}$ such that \eqref{i1}--\eqref{i4} hold for $j=j_0+1$. 
Let
$K'_{j_0+1}= K_{j_0} \cup \widehat A(K_{j_0})$. We claim that
\begin{equation}\label{i5}
|K'_{j_0+1} \cap (\eta + \widehat A^{j_0+1}(\widehat G))| \le 1
\qquad\text{for all } \eta \in \widehat G.
\end{equation}
By \eqref{i1} we have
\begin{align}\notag
|K_{j_0} \cap (\eta + \widehat A^{j_0+1}(\widehat G))| & \le 1 \qquad\text{ for all }\eta \in \widehat G,
\\
\label{i6}
|\widehat A (K_{j_0}) \cap (\eta + \widehat A^{j_0+1}(\widehat G))| &
= 
\begin{cases} 1 & \eta \in \widehat A(\widehat G),\\
0 & \text{otherwise.}
\end{cases}
\end{align}
Hence, \eqref{i5} might fail only if there exist $\eta\in \widehat A(\widehat G)$, $\xi_1 \in K_{j_0}$, $\xi_2 \in \widehat A (K_{j_0})$ such that $\xi_1,\xi_2 \in \eta+\widehat A^{j_0+1}(\widehat G)$. This implies that $\xi_1 \in \widehat A(\widehat G)$. By \eqref{i2} and \eqref{i4}, 
\[
(\widehat A)^{-1}(\xi_1) \in K_{j_0-1} \subset K_{j_0}.
\]
 On the other hand, $(\widehat A)^{-1}(\xi_2) \in K_{j_0}$ and both $(\widehat A)^{-1}(\xi_1)$ and $(\widehat A)^{-1}(\xi_2)$ belong to the same coset of $\widehat G/\widehat A^{j_0}(\widehat G)$. Hence, by \eqref{i1}, we have $\xi_1=\xi_2$, which proves \eqref{i5}.
 
By \eqref{i5} and \eqref{i6} we have
\begin{equation}\label{i7}
|K'_{j_0+1} \cap (\eta + \widehat A^{j_0+1}(\widehat G)| = 1
\qquad\text{for all } \eta \in \widehat A(\widehat G).
\end{equation}
Now we find the smallest $m\in \N$ such that 
\[
K'_{j_0+1} \cap (\chi_m + \widehat A^{j_0+1}(\widehat G)) =\emptyset.
\]
Then, we find the smallest $m'\in \N$ such that
\[
(K'_{j_0+1}\cup\{\chi_m\}) \cap (\chi_{m'} + \widehat A^{j_0+1}(\widehat G)) =\emptyset,
\]
and we keep adding minimal elements from $\widehat G \setminus \widehat A(\widehat G)$ until we have constructed the set $K_{j_0+1}=K'_{j_0+1} \cup \{\chi_m,\chi_{m'},\ldots\}$ such that
\eqref{i1} holds for $j=j_0+1$. This will happen after a finite number of steps. The property \eqref{i2} holds for $j=j_0+1$ by the definition of $K_{j_0+1}$. By the construction of the set $K_{j_0+1}$ and the inductive hypotheses \eqref{i2} and \eqref{i4}, we have
\[
\begin{aligned}
K_{j_0+1} \cap \widehat A(\widehat G) = (K_{j_0} \cup \widehat A(K_{j_0})) \cap \widehat A(\widehat G) & = 
\widehat A(K_{j_0}) \cup (\widehat A(\widehat G) \cap K_{j_0})
\\
&= \widehat A(K_{j_0}) \cup \widehat A(K_{j_0-1})
= \widehat A(K_{j_0}).
\end{aligned}
\]
This proves \eqref{i4} for $j=j_0+1$, and completes the inductive step. Therefore, we have constructed sets $(K_j)$ satisfying $(i)$--$(iv)$ in Theorem \ref{MSF}.

Finally, the property $(v)$ follows by the choice of minimal elements in the above construction. Indeed, suppose that there exists an element $\chi_m$ which was never chosen. That is, there exists $j_0 \in \N$ such that $\chi_1,\ldots,\chi_{m-1} \subset K_{j_0}$ and $\chi_m \not\in K_j$ for all $j\ge j_0$. Let $\chi \in K_{j_0}$ be such that $\chi_m \in \chi+ \widehat A^{j_0}(\widehat G)$. Then, by our construction we have 
\begin{equation}\label{i9}
\chi_m \in \chi+ \widehat A^{j}(\widehat G) \qquad\text{for all  } j \ge j_0.
\end{equation}
 This is shown inductively using the fact that $\chi \not\in \widehat A(\widehat G)$ and the decomposition 
\begin{equation}\label{i8}
\chi+ \widehat A^{j}(\widehat G) = \bigcup_{\pi \in D(A)} \chi+ \widehat A^j \pi + \widehat A^{j+1}(\widehat G).
\end{equation}
Indeed, suppose that $\chi_m \in \chi+ \widehat A^{j}(\widehat G)$. Let $\pi \in D(A)$ be such that
\[
\chi_m \in \chi+ \widehat A^j \pi + \widehat A^{j+1}(\widehat G).
\]
Then, by the construction of $K_{j+1}$ we have
\[
\emptyset \ne K_{j+1}' \cap (\chi+ \widehat A^j \pi + \widehat A^{j+1}(\widehat G)) = K_j \cap (\chi+ \widehat A^j \pi + \widehat A^{j+1}(\widehat G)).
\]
Since $\chi \in K_{j_0} \subset K_j$, by \eqref{i1} and \eqref{i8}, the above intersection is a singleton $\{\chi\}$. Hence, we have $\pi=0$ and thus $\chi_m \in \chi+ \widehat A^{j+1}(\widehat G)$, which proves \eqref{i9}.
By the fact that 
\[\bigcap_{j=0}^\infty \widehat A^j(\widehat G) = \{\bf 0\}
\]
and \eqref{i9} we have $\chi=\chi_m$, which is a contradiction. This proves $(v)$ and completes the proof of Theorem \ref{con-MSF}.
\end{proof}

We finish the paper by illustrating how Theorem \ref{wavelet} can be applied in the context of an MSF MRA given by Theorem \ref{con-MSF} to produce orthonormal MSF wavelets.

\begin{theorem}\label{msfw} Suppose that $(V_j)_{j\in \N_0}$ is an MSF MRA associated with $(G,A)$ as in Theorem \ref{con-MSF}. Let $m=|\ker A|$. Then there exists wavelet functions $\psi^\nu_j$, $j\in \N_0$, $\nu=1,\ldots,m-1$, such that  $(T_a \psi^{\nu}_j)_{a\in \ker A^j}$ is an orthonormal basis of spaces $W^{(\nu)}_j$ satisfying \eqref{wavelet0} and each $\psi^\nu_j$ has minimal support in frequency
\begin{equation}\label{msfw0}
|\operatorname{supp} \widehat \psi^\nu_j| = m^j \qquad\text{for all }j\in \N_0,\ \nu=1,\ldots,m-1.
\end{equation}
\end{theorem}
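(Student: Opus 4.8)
The plan is to take the MSF MRA furnished by Theorem~\ref{con-MSF}, together with its associated sets $(K_j)_{j\in\N_0}$ satisfying $(i)$--$(v)$, and to run the wavelet construction preceding Theorem~\ref{wavelet} with a carefully chosen unitary matrix. First I would fix an orthonormal scaling sequence by setting $\widehat\varphi_j = m^{-j/2}\mathbf 1_{K_j}$. Since property $(ii)$ of Theorem~\ref{MSF} guarantees that each coset $\eta+(\ker A^j)^\perp$ meets $K_j$ in exactly one point, the function $\omega^j_\eta\varphi_j$ has a single nonzero Fourier coefficient equal to $m^{-j/2}$, so $\langle\omega^j_\eta\varphi_j,\omega^j_\eta\varphi_j\rangle = m^{-j}$ and Proposition~\ref{orth} shows that $(\varphi_j)$ is an orthonormal scaling sequence. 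Rescaling by $m^{-j/2}$ leaves each $V_j$ and the support of $\widehat\varphi_j$ unchanged, so this is legitimate.

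The key step is to identify the transition coefficients $\mu^{j+1}_\eta$. By part $(4)$ of Theorem~\ref{thm:scaling} we have $\widehat\varphi_j(\chi)=\mu^{j+1}_\eta\widehat\varphi_{j+1}(\chi)$ on $\eta+(\ker A^{j+1})^\perp$. Evaluating at the unique point $\chi^\ast$ of $K_{j+1}$ in this coset gives $\mu^{j+1}_\eta = \sqrt m\,\mathbf 1_{K_j}(\chi^\ast)$, so $\mu^{j+1}_\eta\in\{0,\sqrt m\}$. Now I would use $K_j\subset K_{j+1}$ together with the fact that $K_j$ meets each coset of $(\ker A^j)^\perp$ in exactly one point: for a fixed $\eta\in D(A^j)$ the $m$ subcosets $\eta+\widehat A^j\pi_k+(\ker A^{j+1})^\perp$, $\pi_k\in D(A)$, partition $\eta+(\ker A^j)^\perp$, and the single point of $K_j$ in the latter lands in precisely one of them. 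Hence exactly one index $k_0=k_0(\eta)$ satisfies $\mu^{j+1}_{\eta+\widehat A^j\pi_{k_0}}=\sqrt m$ while all others vanish; this both reproves \eqref{eq:murelation} and shows that the first row $b_{0k}=\mu^{j+1}_{\eta+\widehat A^j\pi_k}/\sqrt m$ of the matrix $B=B_\eta$ is the standard coordinate vector $e_{k_0}$.

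Finally I would complete $B_\eta$ to a permutation matrix rather than using the Householder extension: since the first row is a coordinate vector, any permutation $\sigma$ with $\sigma(0)=k_0$ produces a unitary $B_\eta$ in which every row has a single nonzero entry equal to $1$. Applying Theorem~\ref{wavelet} with this choice yields wavelet functions $\psi^\nu_j$ for which $(T_a\psi^\nu_j)_{a\in\ker A^j}$ is an orthonormal basis of $W^{(\nu)}_j$ and \eqref{wavelet0} holds. For the support count, \eqref{wave1} gives $\operatorname{supp}\widehat\psi^\nu_j=\{\chi\in K_{j+1}:\alpha^{\nu,j}_\chi\neq0\}$, and since $\alpha^{\nu,j}_{\eta+\widehat A^j\pi_k}=\sqrt m\,(B_\eta)_{\nu,k}$ is nonzero for exactly one $k$ per $\eta\in D(A^j)$, the support contains exactly one point of $K_{j+1}$ for each of the $m^j$ cosets indexed by $D(A^j)$; thus $|\operatorname{supp}\widehat\psi^\nu_j|=m^j$, which is \eqref{msfw0}. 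I expect the main obstacle to be the bookkeeping in the key step --- verifying that the MSF structure collapses each $\mu^{j+1}$ to a single nonzero value so that $B_\eta$ may be taken to be a permutation matrix; once this is in place, the orthonormality and the support estimate follow mechanically from Theorem~\ref{wavelet}.
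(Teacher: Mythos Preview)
Your proposal is correct and follows essentially the same route as the paper: normalize the scaling sequence by $\widehat\varphi_j=m^{-j/2}\mathbf 1_{K_j}$, observe that the MSF structure forces the first row of each $B_\eta$ to be a standard coordinate vector, complete $B_\eta$ to a permutation matrix, and invoke Theorem~\ref{wavelet}. The paper also sketches an equivalent alternative in which one directly partitions $K_{j+1}\setminus K_j$ into sets $K_j^{(1)},\ldots,K_j^{(m-1)}$ each meeting every coset $\eta+\widehat A^j(\widehat G)$ once and defines $\widehat\psi^\nu_j=m^{-j/2}\mathbf 1_{K_j^{(\nu)}}$, but this is just a reformulation of what your permutation-matrix choice produces.
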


\begin{proof}
Recall that the spaces $V_j$ are of the form \eqref{MSF0} for some sequence $(K_j)_{j\in\N_0}$ of subsets of $\widehat G$ satisfying conditions $(i)$--$(v)$ of Theorem \ref{MSF}. Moreover, by \eqref{MSFv} we can assume that the sequence of scaling functions $(\varphi_j)_{j\in \N_0}$ is orthonormal and given by 
\begin{equation}\label{MSFo}
\widehat \varphi_j = m^{-j/2}{\bf 1}_{K_j}.
\end{equation}
We can then follow the general construction procedure of Theorem \ref{wavelet} by observing that the first row of the $m\times m$ matrix $B$ contains exactly one non-zero entry, which is equal to $1$. To guarantee that this matrix is unitary it suffices to choose for $B$ a permutation matrix. Then, one can show that the wavelets defined by \eqref{wave1} satisfy \eqref{msfw0}.

Alternatively, we can give a more direct construction of wavelet functions as follows. Since
\[
(\ker A^j)^\perp =\widehat A^{j}(\widehat G) = \bigcup_{\pi \in D(A)}
\widehat A^j \pi + \widehat A^{j+1}(\widehat G),
\]
by $(ii)$ of Theorem~\ref{MSF} we have for all $j\in \N_0$,
\[
|K_{j+1} \cap (\eta + \widehat A^{j}(\widehat G)| = m \qquad\text{for all }\eta \in D(A^j).
\]
Hence, we can find disjoint sets $K^{(0)}_j, \ldots, K^{(m-1)}_j$ such that $K^{(0)}_j=K_j$ and
\begin{equation}\label{MS4}
K_{j+1} = K_j \cup K^{(1)}_j\cup \ldots \cup K^{(m-1)}_j
\end{equation}
and 
\begin{equation}\label{MS5}
|K^{(\nu)}_j \cap (\eta + \widehat A^{j}(\widehat G))| = 1 \qquad\text{for all }\eta \in D(A^j), \ \nu=1,\ldots,m-1.
\end{equation}
Define wavelet functions $\psi^\nu_j$ by
\begin{equation}\label{MS6}
\widehat \psi^\nu_j = m^{-j/2}{\bf 1}_{K^{(\nu)}_j}.
\end{equation}
By Proposition \ref{orth} $(T_{a}\psi^{\nu}_j)_{a\in \ker A^j}$ is an orthonormal basis of spaces 
\[
W^{(\nu)}_j = \{ f \in L^2(G): \operatorname{supp} \widehat f \subset K^{(\nu)}_j \}
\]
satisfying \eqref{wavelet0} by \eqref{MS4}. Finally, \eqref{msfw0} follows immediately from \eqref{MS5} and \eqref{MS6}.
\end{proof}


\begin{thebibliography}{10}
	
\bibitem{AES}
S.~Albeverio, S.~Evdokimov, M. A. Skopina. 
$p$-adic multiresolution analysis and wavelet frames.
{\em J. Fourier Anal. Appl.} {\bf 16}(5), 693--714 (2010). 
	
\bibitem{BS}
A.~Barg, M.~Skriganov.
Association schemes on general measure spaces and zero-dimensional Abelian groups.
{\em Adv. Math.} {\bf 281}, 142--247 (2015). 

\bibitem{BJ1}
B.~Behera, Q.~Jahan.
Multiresolution analysis on local fields and characterization of scaling functions. 
{\em Adv. Pure Appl. Math.} {\bf 3}(2), 181--202 (2012).

\bibitem{BJ2}
B.~Behera, Q.~Jahan.
Characterization of wavelets and MRA wavelets on local fields of positive characteristic.
{\em Collect. Math.} {\bf 66}(1), 33--53 (2015).

	
\bibitem{BB1}
J.~J.~Benedetto, R.~L.~Benedetto.
\newblock A wavelet theory for local fields and related groups.
\newblock {\em J. Geom. Anal.} {\bf 14}(3), 423--456 (2004).

\bibitem{BB2}
J.~J.~Benedetto, R.~L.~Benedetto.
The construction of wavelet sets. {\em Wavelets and multiscale analysis}, 17--56, 
 Appl. Numer. Harmon. Anal., Birkh\"auser/Springer, New York, 2011. 


\bibitem{BR}
M.~Bownik, K.~Ross.
\newblock The structure of translation-invariant spaces on locally compact abelian groups.
\newblock {\em J. Fourier Anal. Appl.} {\bf 21}(4), 849--884 (2015).

\bibitem{Dah} S. Dahlke. Multiresolution analysis and wavelets on locally compact abelian groups. 
{\em Wavelets, images, and surface fitting}, 141--156  A K Peters, Wellesley, MA, 1994.


\bibitem{Farkov}
Y.~Farkov.
\newblock Orthogonal wavelets with compact support on locally compact abelian groups.
\newblock {\em  Izv. Ross. Akad. Nauk Ser. Mat.} {\bf 69}(3),193--220 (2005).

\bibitem{F}
G.~Folland, {\em A course in abstract harmonic analysis}. Studies in Advanced Mathematics. CRC Press, Boca Raton, FL, 1995.

\bibitem{R}
E. Hewitt, K. Ross.
\newblock {\em Abstract harmonic analysis}. {V}ol. {I}
\newblock  Academic Press, Inc., Publishers, New York; Springer-Verlag,
	Berlin-G\"ottingen-Heidelberg 1963.
	
\bibitem{JLJ}
H.~Jiang, D.~Li, N.~ Jin.
Multiresolution analysis on local fields.  
{\em J. Math. Anal. Appl.} {\bf 294}(2), 523--532 (2004).

\bibitem{KSS}
A.~Yu.~Khrennikov, V.~M.~Shelkovich, M.~A.~Skopina.
$p$-adic refinable functions and MRA-based wavelets. {\em J. Approx. Theory} {\bf 161}(1), 226--238 (2009).

\bibitem{Lang1}
W. Lang.
\newblock Orthogonal wavelets on the Cantor dyadic group.
\newblock {\em SIAM J. Math. Anal.} {\bf 27}(1), 305--312 (1996).

\bibitem{Lang2}
W. Lang.
\newblock Wavelet analysis on the Cantor dyadic group.
\newblock {\em Houston J. Math.} {\bf 24} (3), 533--544 (1998).

\bibitem{Luk1}
S.~F.~Lukomskii.
Multiresolution analysis on zero-dimensional groups, and wavelet bases. 
{\em Sb. Math.} {\bf 201}(5-6), 669--691 (2010).

\bibitem{Luk2}
S.~F.~Lukomskii.
Multiresolution analysis on product of zero-dimensional Abelian groups.
{\em J. Math. Anal. Appl.} {\bf 385}(2), 1162--1178 (2012).

\bibitem{MS}
I. E. Maksimenko, M. A. Skopina. 
\newblock
Multidimensional periodic wavelets.
{\em Algebra i Analiz} {\bf 15}(2), 1--39 (2003); translation in
{\em St. Petersburg Math. J.} {\bf 15}(2), 165--190 (2004).
 
\bibitem{NPS} I.~Novikov, V.~Protasov, M.~A.~Skopina. {\em Wavelet theory}. Translations of Mathematical Monograph, {\bf 239}. American Mathematical Society, Providence, 2011.

\bibitem{Rudin}
W.~Rudin.
\newblock {\em Fourier analysis on groups},
\newblock John Wiley \& Sons, Inc., New York, 1990.

\bibitem{SS}
V.~Shelkovich, M.~A.~Skopina, $p$-adic Haar multiresolution analysis and pseudo-differential operators. 
{\em J. Fourier Anal. Appl.} {\bf 15}(3), 366--393 (2009). 

\bibitem{Sk}
M. A. Skopina. 
Multiresolution analysis of periodic functions. 
{\em East J. Approx.} {\bf 3}(2), 203--224 (1997). 


\bibitem{Wal}
P. Walters.
\newblock {\em An introduction to ergodic theory},
\newblock Springer, 2000.

\end{thebibliography}
\end{document}